\documentclass[12pt, oneside, a4paper]{article}

\usepackage{amsmath}
\usepackage{amsfonts}
\usepackage{amssymb}
\usepackage{amsthm,mathrsfs}
\usepackage{enumerate}
\usepackage{graphicx}
\newtheorem{theorem}{Theorem}[section]
\newtheorem{conjecture}[theorem]{Conjecture}
\newtheorem{lemma}[theorem]{Lemma}

\newtheorem{corollary}[theorem]{Corollary}

\theoremstyle{definition}
\newtheorem{remark}[theorem]{Remark}

%\setlatintextfont{Linux libertine}
%\setdigitfont[Scale=1]{Yas}

%%%%%%%%%%%%%%%%%%%%%%%%%%%

\title{\textbf{Cayley graphs on $p$-solvable groups generated by $p$-singular elements}}
\author{Mahdi Ebrahimi\footnote{ m.ebrahimi.math@ipm.ir}
 \\
 {\small\em  School of Mathematics, Institute for Research in Fundamental Sciences (IPM)},\\{\small\em P.O. Box: 19395--5746, Tehran, Iran},\\{\small\em ORCID ID: 0000-0001-9789-7376}\\ Declarations of interest: none
\\
}
\date{}

\begin{document}

\maketitle

%\fontsize{22}{23}\selectfont
%\baselineskip=12mm

\begin{abstract}
For a graph $\Gamma$, the multiplicity of the eigenvalue $0$, denoted by $\eta(\Gamma)$, is called the nullity of $\Gamma$.
 Also the energy of $\Gamma$, denoted by $\mathcal{E}(\Gamma)$, is defined as the sum of the absolute values of the eigenvalues of $\Gamma$.
The index of a subgroup $H$ in a group $G$ is denoted by $[G:H]$.
 For a prime $p$, let $G$ be a finite $p$-solvable group whose order is divisible by $p$. Also let $\Omega_p(G)$ be the set of all $p$-singular elements of $G$.
  In this paper, we apply block theory of finite groups to show that the Cayley graph $\Gamma_p(G):=\mathrm{Cay}(G,\Omega_p(G))$
   is an integral graph with $\eta(\Gamma_p(G))=|G|-[G:O_{p^\prime}(G)]$, where $O_{p^\prime}(G)$ is  the largest normal subgroup of $G$ whose order is co-prime to $p$.
   We also find a lower bound for $\mathcal{E}(\Gamma_p(G))$.
     Finally, we prove that the diameter of $\Gamma_p(G)$ is at most $ |G|_p$.

  \end{abstract}
\noindent {\bf{Keywords:}}  Cayley graph, nullity, energy, $p$-block. \\
\noindent {\bf AMS Subject Classification Number:}  05C50, 20C15, 20C20, 05C92.

\section{Introduction}
$\noindent$In this paper, all groups and graphs are assumed to be finite. Let $\Gamma$ be a simple graph with vertex set $\{\nu_1,\nu_2,\dots, \nu_n\}$. The \textit{adjacency matrix} of $\Gamma$, denoted by $A(\Gamma)$, is the $n\times n$ matrix such that the $(i,j)$-entry is $1$ if $\nu_i$ and $\nu_j$ are adjacent, and is $0$ otherwise. The \textit{eigenvalues} of $\Gamma$ are the eigenvalues of its adjacency matrix $A(\Gamma)$. The study of eigenvalues of graphs is an important part of modern graph theory.
The graph $\Gamma$ is called  \textit{singular}, whenever the matrix $A(\Gamma)$ is singular. It is clear that the graph $\Gamma$ is singular if and only if $0$ is an eigenvalue of it.
For the graph $\Gamma$, the multiplicity of the eigenvalue $0$, denoted by $\eta(\Gamma)$, is called the \textit{nullity} of $\Gamma$.
The problem of graph singularity was first appeared in structural chemistry in the context of Huckel Theory \cite{Huckel}. Singular graphs play a significant role in graph theory, and there are many applications in physics and chemistry (see \cite{ph}, \cite{sz} and \cite{ch}). The chemical importance of the nullity of graphs lies in the fact that if $\eta(\Gamma)$ is non-zero for a molecular graph $\Gamma$, then the corresponding chemical compound is highly reactive and unstable, or nonexistent (see \cite{At}, \cite{Cvg}).

Suppose $\{\lambda_1, \lambda_2, \dots, \lambda_n\}$ is the set of all eigenvalues of $\Gamma$. The \textit{energy} of the graph $\Gamma$ is defined as $\mathcal{E}(\Gamma):=\sum_{i=1}^n|\lambda_i|$. This concept was introduced by Gutman \cite{182}. In recent decades, the energy of a graph has been much studied in the mathematical-chemistry literature for instance see, \cite{Ak}, \cite{7}, \cite{GZ}, \cite{oboudi}  and \cite{zh}. The total $\pi$-electron energy and various "resonance energies" derived from it, play an important role in the theory of conjugated molecules, for more details see \cite{8}.
In recent years, several lower bounds for the energy of graphs have been achieved by researchers. The reader should be referred, for instance to \cite{Ak}, \cite{Bo}, \cite{jahan}, \cite{Ma}, \cite{oboudi} and \cite{Ti}.

In this paper, we assume that the reader is familiar with finite group theory (see \cite{Is}). Let $G$ be a finite group and $S$ be
 an inverse closed subset of $G$ with $1 \notin S$.
  The \textit{Cayley graph} $\mathrm{Cay}(G,S)$ is the graph which has the elements of
   $G$ as its vertices and two vertices $u,\nu \in G$
    are joined by an edge if and only if $\nu=au$, for some $a\in S$. A Cayley graph $\mathrm{Cay}(G,S)$ is called \textit{normal} if $S$ is closed under conjugation with elements of $G$. Also $\mathrm{Cay}(G,S)$ is called \textit{integral} if its eigenvalues are all integers.

Now let $p$ be a prime. The $p$-part and $p^{\prime}$-part of the order of $G$ are denoted by $|G|_p$ and $|G|_{p^{\prime}}$, respectively. The \textit{$p^\prime$-core} of $G$ is the largest normal subgroup of $G$ whose order is co-prime to $p$ and is denoted by $O_{p^\prime}(G)$. For a subgroup $H$ of $G$, the index of $H$ in $G$ is denoted by $[G:H]$. We denote by $r_p(G)$, the index $[G:O_{p^\prime}(G)]$ of $O_{p^\prime}(G)$ in $G$. The set of prime divisors of the order of $G$ is denoted by $ \pi(G)$. Suppose $p\in \pi(G)$. A \textit{$p$-singular element} of $G$ is an element $g\in G$ in which the order of $g$ is divisible by $p$. We use the notations $\Omega_p(G)$ and $H_p(G)$ for the set of all $p$-singular elements of $G$, and the subgroup of $G$ generated by $\Omega_p(G)$, respectively. We also denote by $c_p(G)$ and $d_p(G)$, the index $[G:H_p(G)]$ and the cardinality of $\Omega_p(G)$, respectively. Assume that $\Gamma_p(G):=\mathrm{Cay}(G,\Omega_p(G))$. In this paper, we wish to study the nullity and energy of $\Gamma_p(G)$, whenever $G$ is a  $p$-solvable group.

 \begin{theorem}\label{energy}
 Let $G$ be a finite group and $p\in \pi(G)$. If $G$ is $p$-solvable, then:
 \begin{itemize}
 \item[a)] $\eta(\Gamma_p(G))=|G|-r_p(G)$.
 \item[b)]  $\Gamma_p(G)$ is integral.
\item[c)]  $\mathcal{E}(\Gamma_p(G))$ is at least the maximum of
$$\{r_p(G)+c_p(G)(d_p(G)-1), \sqrt{|G|d_p(G)+r_p(G)(r_p(G)-1)}\}.$$
\item[d)] The diameter of $\Gamma_p(G)$ is at most $ |G|_p$.
 \end{itemize}
 \end{theorem}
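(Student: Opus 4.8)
The plan is to diagonalize the adjacency operator of $\Gamma_p(G)$ on the irreducible constituents of the regular representation. Conjugate elements have equal order, so $\Omega_p(G)$ is closed under conjugation and inversion; it is therefore a normal, inverse-closed connection set, and the standard spectral formula for normal Cayley graphs gives the eigenvalues
\[
\lambda_\chi=\frac{1}{\chi(1)}\sum_{g\in\Omega_p(G)}\chi(g),\qquad \chi\in\mathrm{Irr}(G),
\]
with $\lambda_\chi$ occurring with multiplicity $\chi(1)^2$. Every assertion in (a)–(c) is then a statement about this list, so the two basic tasks are to decide when $\lambda_\chi=0$ and to show each $\lambda_\chi$ is an integer. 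For (b) I would argue by Galois descent, and this needs neither $p$-solvability nor block theory: each $\lambda_\chi$ is a sum of central character values $\omega_\chi(\hat K)$ over the classes $K\subseteq\Omega_p(G)$, hence an algebraic integer in $\mathbb{Q}(\zeta_{|G|})$; for $\sigma_k\colon \zeta_{|G|}\mapsto\zeta_{|G|}^{k}$ with $\gcd(k,|G|)=1$ we have $\sigma_k(\chi(g))=\chi(g^k)$, and $g\mapsto g^k$ permutes $\Omega_p(G)$ because it preserves orders, so $\sigma_k(\lambda_\chi)=\lambda_\chi$. Thus each $\lambda_\chi$ is a rational algebraic integer, proving (b); this rationality is also what the energy bounds in (c) rely on.

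For (a) the structural observation is that $N:=O_{p'}(G)$ is a normal $p'$-subgroup, so the $p$-part of an element's order is unchanged modulo $N$; hence $g$ is $p$-singular if and only if $gN$ is $p$-singular in $\bar G:=G/N$, i.e.\ $\Omega_p(G)=\pi^{-1}(\Omega_p(\bar G))$ for the projection $\pi\colon G\to\bar G$. The connection set is thus a union of $N$-cosets, and $\Gamma_p(G)$ is the blow-up of $\Gamma_p(\bar G)$ in which each vertex is replaced by an independent set of size $|N|$; its adjacency matrix is $A(\Gamma_p(\bar G))\otimes J_{|N|}$. As $J_{|N|}$ has spectrum $\{|N|,0^{(|N|-1)}\}$, the nonzero eigenvalues of $\Gamma_p(G)$ are $|N|$ times those of $\Gamma_p(\bar G)$, and $\Gamma_p(G)$ picks up exactly $(|N|-1)|\bar G|=|G|-r_p(G)$ additional zero eigenvalues. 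Therefore $\eta(\Gamma_p(G))=\big(|G|-r_p(G)\big)+\eta(\Gamma_p(\bar G))$, and (a) reduces to showing that $\Gamma_p(\bar G)$ is nonsingular, where now $O_{p'}(\bar G)=1$.

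The heart of the proof, and the place where $p$-solvability and block theory are indispensable, is the resulting Key Lemma: if $G$ is $p$-solvable with $O_{p'}(G)=1$ then $\lambda_\chi\neq0$ for every $\chi$, equivalently $\sum_{g\in G_{p'}}\chi(g)\neq0$ for every $\chi\neq1$, where $G_{p'}$ denotes the set of $p$-regular elements. The vanishing half of the dichotomy is easy and general: if $O_{p'}(G)\not\subseteq\ker\chi$ then, by Clifford's theorem, $\chi|_{N}$ is supported on a single $G$-orbit of nontrivial characters of $N$ and so contains no trivial constituent, whence the projection $E=|N|^{-1}\sum_{n\in N}\mathcal{X}(n)$ onto the $N$-fixed space vanishes and $\sum_{n\in N}\chi(tn)=|N|\,\mathrm{tr}(\mathcal{X}(t)E)=0$ for all $t$, giving $\lambda_\chi=0$ (this re-proves the $|G|-r_p(G)$ zeros found above). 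For the nonvanishing half I would record the general fact that $\sum_{g\in G_{p'}}\chi(g)=|G|\langle\chi,\xi\rangle$, where $\xi$ is the class function equal to $1$ on $p$-regular classes; since $\xi=\sum_i (C^{-1})_{i1}\Phi_i$ lies in the principal block $B_0$ (the Cartan matrix $C$ is block-diagonal and the trivial Brauer character lies in $B_0$), one gets $\langle\chi,\xi\rangle=0$ whenever $\chi\notin B_0$. Thus the Key Lemma splits into (i) a $p$-solvable group with $O_{p'}(G)=1$ has a \emph{unique} $p$-block, so every $\chi$ lies in $B_0$ (this is where $C_G(O_p(G))\le O_p(G)$ enters), and (ii) for $1\neq\chi\in B_0$ one has $\langle\chi,\xi\rangle\neq0$. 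I expect (ii) to be the main obstacle; I would attack it either through the sign pattern of the first column of $C^{-1}$ for the principal block, or by the Clifford reduction $\sum_{g\in G_{p'}}\chi(g)=[G:T]\sum_{h\in T_{p'}}\psi(h)$ to the inertia group $T=I_G(\theta)$ of a constituent $\theta$ of $\chi|_{O_p(G)}$, where the nonvanishing can be traced inductively.

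Granting (a) and (b), part (c) is spectral bookkeeping. By (a) exactly $r_p(G)$ eigenvalues are nonzero, and by (b) each such eigenvalue is an integer of absolute value at least $1$; moreover $\Gamma_p(G)$ is $d_p(G)$-regular with $c_p(G)$ connected components (one for each coset of $H_p(G)$), so the Perron value $d_p(G)$ occurs with multiplicity $c_p(G)$. Writing $\mathcal{E}(\Gamma_p(G))=\sum_{\lambda_i\neq0}|\lambda_i|\ge c_p(G)\,d_p(G)+\big(r_p(G)-c_p(G)\big)\cdot1$ yields the first bound $r_p(G)+c_p(G)(d_p(G)-1)$, while combining $\mathcal{E}^2=\sum_i\lambda_i^2+\sum_{i\neq j}|\lambda_i\lambda_j|$ with $\sum_i\lambda_i^2=2|E(\Gamma_p(G))|=|G|\,d_p(G)$ and the AM–GM inequality over the $r_p(G)$ nonzero integer eigenvalues (whose product has absolute value at least $1$) yields the second bound $\sqrt{|G|\,d_p(G)+r_p(G)(r_p(G)-1)}$. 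Finally, for (d) I would pass to a connected component $\mathrm{Cay}(H_p(G),\Omega_p(G))$ and use that a Sylow $p$-subgroup $P_0$ satisfies $P_0\setminus\{1\}\subseteq\Omega_p(G)$ (so its cosets are cliques) together with the factorization $g_{p'}=g_p^{-1}g$, which realizes the $p'$-part of any $p$-singular $g$ as a word of length $2$; a direct count along a chief series of the $p$-solvable group $G$ then bounds every word length by $|G|_p$. Since this bound is very generous, (d) is routine and not where the difficulty lies.
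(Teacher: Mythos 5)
Your architecture for (a)--(c) is sound and in places more self-contained than the paper's: the tensor-product blow-up $A(\Gamma_p(G))=A(\Gamma_p(G/O_{p'}(G)))\otimes J_{|O_{p'}(G)|}$ is a correct substitute for the paper's appeal to \cite[Theorem 10.20]{nav}, and your Galois-descent proof of integrality re-proves the result of \cite{ko} that the paper simply cites. But there is a genuine gap at exactly the step you flag as ``the main obstacle'': statement (ii), that $\sum_{g\in G^{0}}\chi(g)\neq 0$ for every $\chi$ in the principal block. Your step (i) (a $p$-solvable group with trivial $p'$-core has a unique block) only tells you that $\mathrm{Irr}(G)$ is \emph{connected} under the relation $\leftrightarrow$ of Theorem \ref{character}; it does not give the direct edge $1_G\leftrightarrow\chi$, which is what the nonvanishing amounts to. The paper closes this via the remark following \cite[Lemma 3.20]{nav}: a \emph{height-zero} character of a block is $\leftrightarrow$-linked to every character of that block, and $1_G$ has height zero in $B_0$; this yields Theorem \ref{prin} for arbitrary finite groups with no induction and no blow-up reduction needed. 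Your first proposed route (the sign/valuation of the first column of $C^{-1}$) is essentially the proof of that lemma, but as written the key nonvanishing is asserted, not proved, and all of part (a) rests on it.

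Part (d) is a second genuine gap, and you have mischaracterized it as routine. The assertion that ``a direct count along a chief series bounds every word length by $|G|_p$'' is not an argument: your observations only place nontrivial elements of one Sylow $p$-subgroup at distance $1$ and $p'$-parts of $p$-singular elements at distance $2$, which says nothing about a general element of $H_p(G)$, and no induction is actually set up. The paper's proof is spectral and rests on deep input: by Theorem \ref{main} the distinct eigenvalues of $\Gamma_p(G)$ number at most $|\mathrm{Irr}(B_0)|+1$, so by \cite[Theorem 3.13]{Cv} the diameter is at most $k(B_0)$; since the defect groups of $B_0$ are the Sylow $p$-subgroups, $k(B_0)\le p^{d(B_0)}=|G|_p$ follows from Brauer's $k(B)$-conjecture for $p$-solvable groups, i.e.\ the solution of the $k(GV)$-problem \cite{gl}. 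Parts (b) and (c) of your proposal are correct as written; the second energy bound reproduces \cite[Corollary 4]{An1}, which is what the paper invokes.
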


The simple graph $\Gamma$ with $n$ vertices is said to be \textit{hyperenergetic} if its energy exceeds the energy of the complete graph $K_n$; that is, if $\mathcal{E}(\Gamma)>2n-2$. Otherwise, $\Gamma$ is called \textit{non-hyperenergetic}. Hyperenergetic graphs were first introduced by Gutman \cite{Hyper}. In \cite{182} Gutman conjectured that $\mathcal{E}(\Gamma)\leq 2n-2$ holds for all graphs with $n$ vertices. In general, this conjecture is false. The first counterexample was found in 1986 using Cvetkovics's computer system graphs \cite{3}. As another counterexample, Akbari et al. \cite{kn} proved that Kneser graphs and their complements are hyperenergetic. The following assertion shows the validity of this conjecture for a class of cayley graphs on $p$-solvable groups generated by $p$-singular elements.

\begin{theorem}\label{nil}
Assume that $G$ is a $p$-solvable group, for some prime
 $p\in \pi(G)$. If $G/O_{p^\prime}(G)$ is a
  $p$-group or a Frobenius group, then $\Gamma_p(G)$ is a non-hyperenergetic graph with
  $\mathcal{E} (\Gamma_p(G))=2|G|-2|G|_{p^\prime}$.
\end{theorem}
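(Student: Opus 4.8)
The plan is to realize $\Gamma_p(G)$ as a blow-up of $\Gamma_p(\bar G)$, where $\bar G=G/N$ and $N=O_{p^\prime}(G)$, thereby reducing the energy computation to the case of trivial $p^\prime$-core, and then to identify $\Gamma_p(\bar G)$ explicitly under each of the two hypotheses. The first step is to record the elementary but crucial fact that, since $N$ is a normal $p^\prime$-subgroup, an element $g\in G$ is $p$-singular if and only if its image $gN$ is $p$-singular in $\bar G$ (the $p$-part of $g$ generates a $p$-subgroup meeting $N$ trivially, so it survives in $\bar G$). Writing $\pi\colon G\to \bar G$ for the projection, this says $\Omega_p(G)=\pi^{-1}(\Omega_p(\bar G))$; in particular $\Omega_p(G)$ is a union of cosets of $N$, none of which is $N$ itself, since $N$ consists of $p$-regular elements.

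Consequently two vertices lying in the same $N$-coset are never adjacent, while whether $u$ and $v$ from two distinct cosets are adjacent depends only on $\pi(u)$ and $\pi(v)$. Ordering the vertices by cosets, the adjacency matrix factors as $A(\Gamma_p(G))=A(\Gamma_p(\bar G))\otimes J_{|N|}$, with $J_{|N|}$ the all-ones matrix. Since the nonzero spectrum of $J_{|N|}$ is the single value $|N|$, the nonzero spectrum of $\Gamma_p(G)$ is that of $\Gamma_p(\bar G)$ scaled by $|N|$, whence $\mathcal{E}(\Gamma_p(G))=|N|\,\mathcal{E}(\Gamma_p(\bar G))$. Together with the identities $|G|=|N|\,|\bar G|$ and $|G|_{p^\prime}=|N|\,|\bar G|_{p^\prime}$, this reduces the theorem to proving $\mathcal{E}(\Gamma_p(\bar G))=2|\bar G|-2|\bar G|_{p^\prime}$ under the additional assumption $O_{p^\prime}(\bar G)=1$.

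Next I would analyze $\bar G$ under each hypothesis. If $\bar G$ is a $p$-group, then every non-identity element is $p$-singular, so $\Gamma_p(\bar G)$ is the complete graph $K_{|\bar G|}$, whose energy is $2|\bar G|-2=2|\bar G|-2|\bar G|_{p^\prime}$ (here $|\bar G|_{p^\prime}=1$). If $\bar G$ is Frobenius with kernel $K$ and complement $H$, the key structural point, and the step I expect to require the most care, is to show that $K$ is a $p$-group and $H$ a $p^\prime$-group. Indeed $K$ is nilpotent, so its Hall $p^\prime$-subgroup is characteristic in $K$ and hence normal in $\bar G$; as $O_{p^\prime}(\bar G)=1$ this $p^\prime$-part is trivial, forcing $K$ to be a $p$-group, and then $\gcd(|K|,|H|)=1$ gives $p\nmid |H|$. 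Since every element of $\bar G$ outside $K$ lies in a conjugate of the $p^\prime$-group $H$, the $p$-singular elements are exactly $K\setminus\{1\}$, so $\Omega_p(\bar G)=K\setminus\{1\}$.

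Finally, because the connection set $K\setminus\{1\}$ lies inside the normal subgroup $K$, edges of $\Gamma_p(\bar G)$ only join elements in the same coset of $K$, and within each such coset one obtains a complete graph $K_{|K|}$; thus $\Gamma_p(\bar G)$ is a \emph{disjoint union} of $[\bar G:K]=|H|$ copies of $K_{|K|}$. Its energy is therefore $|H|\cdot 2(|K|-1)=2|\bar G|-2|H|=2|\bar G|-2|\bar G|_{p^\prime}$, as required. Combining with the reduction above yields $\mathcal{E}(\Gamma_p(G))=2|G|-2|G|_{p^\prime}$. Non-hyperenergeticity is then immediate: since $|G|_{p^\prime}\ge 1$ and $\Gamma_p(G)$ has $|G|$ vertices, $\mathcal{E}(\Gamma_p(G))=2|G|-2|G|_{p^\prime}\le 2|G|-2$.
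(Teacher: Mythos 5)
Your argument is correct, and it takes a genuinely different route from the paper's. The paper stays inside character and block theory: it first proves $\Omega_p(G)=PO_{p'}(G)-O_{p'}(G)$ using the centralizer structure of the Frobenius quotient, then applies Lemma \ref{ener} (which rests on the fact that $\mathrm{Irr}(B_0)=\mathrm{Irr}(G/O_{p'}(G))$ for $p$-solvable groups) and evaluates each character sum $\lambda_\chi(G,p)$ by column orthogonality, obtaining $|O_{p'}(G)|$ for the characters with $P\not\subseteq\mathrm{Ker}(\chi)$ and $d_p(G)$ for the rest. You instead prove the cleaner statement $\Omega_p(G)=\pi^{-1}(\Omega_p(\bar G))$ for $\bar G=G/N$ with $N=O_{p'}(G)$ --- valid for any normal $p'$-subgroup, with no hypothesis on the quotient --- deduce the Kronecker factorization $A(\Gamma_p(G))=A(\Gamma_p(\bar G))\otimes J_{|N|}$ and hence $\mathcal{E}(\Gamma_p(G))=|N|\,\mathcal{E}(\Gamma_p(\bar G))$, and then identify $\Gamma_p(\bar G)$ by hand as $K_{|\bar G|}$ in the $p$-group case, or as $|\bar G|_{p'}$ disjoint copies of $K_{|\bar G|_p}$ in the Frobenius case after showing the kernel is a normal Sylow $p$-subgroup. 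Your approach is more elementary and more transparent: it needs no block theory, only the nilpotency of Frobenius kernels (Thompson's theorem) plus the standard partition of a Frobenius group, and it reveals that $\Gamma_p(G)$ is literally a blow-up of a disjoint union of complete graphs, which makes both the value $2|G|-2|G|_{p'}$ and the non-hyperenergeticity immediate; it also shows that the $p$-solvability hypothesis is automatic here, since $1\trianglelefteq O_{p'}(G)\trianglelefteq PO_{p'}(G)\trianglelefteq G$ becomes a $p$-series once the kernel is known to be a $p$-group. What the paper's machinery buys is uniformity with the rest of the article: the same Lemma \ref{ener} drives the lower bounds of Theorem \ref{energy}(c), where $\Omega_p(G)$ need not be a union of cosets of a normal subgroup and no such graph-theoretic factorization is available.
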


\begin{remark}
In general the Cayley graph on a $p$-solvable group generated by $p$-singular elements is not a non-hyperenergetic graph. For example, suppose $G$ is the symmetric group on $4$ letters. Then it is easy to see that $\mathcal{E} (\Gamma_2(G))=54>2|G|-2=46$.
\end{remark}

We now investigate a useful method to construct a singular graph.

\begin{corollary}\label{solvable}
Suppose $G$ is a solvable group. Then there exists at most one prime $p\in \pi(G)$ such that $\Gamma_p(G)$ is non-singular.
\end{corollary}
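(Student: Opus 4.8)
The plan is to first convert the graph-theoretic notion of non-singularity into a purely group-theoretic condition by means of part (a) of Theorem \ref{energy}, and then to rule out two such primes coexisting by locating an obstruction inside a minimal normal subgroup.

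First I would record that a graph fails to be singular precisely when $0$ is not one of its eigenvalues, i.e.\ precisely when its nullity vanishes. Since $G$ is solvable it is $p$-solvable for every prime $p$, so Theorem \ref{energy}(a) applies to each $p \in \pi(G)$ and yields $\eta(\Gamma_p(G)) = |G| - r_p(G) = |G| - [G : O_{p'}(G)]$. Consequently $\Gamma_p(G)$ is non-singular if and only if $[G : O_{p'}(G)] = |G|$, that is, if and only if $O_{p'}(G) = 1$. This reduces the corollary to the group-theoretic assertion that, for a solvable group $G$, there is at most one prime $p \in \pi(G)$ with $O_{p'}(G) = 1$.

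Next I would argue by contradiction. Suppose there exist two distinct primes $p, q \in \pi(G)$ with $O_{p'}(G) = O_{q'}(G) = 1$. Since $p \in \pi(G)$ the group $G$ is nontrivial, so it admits a minimal normal subgroup $N$, and because $G$ is solvable $N$ is an elementary abelian $r$-group for some prime $r \in \pi(G)$. If $r \neq p$, then $|N|$ is coprime to $p$ and $N \trianglelefteq G$, whence $N \leq O_{p'}(G) = 1$ by maximality of the $p'$-core, contradicting $N \neq 1$; thus $r = p$. Running the identical argument with $q$ in place of $p$ forces $r = q$ as well, so $p = q$, contrary to assumption. Hence at most one prime $p \in \pi(G)$ makes $\Gamma_p(G)$ non-singular.

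The argument is short, and I do not anticipate a serious obstacle: the only point genuinely requiring care is the equivalence between non-singularity of $\Gamma_p(G)$ and the condition $O_{p'}(G) = 1$, which rests entirely on the nullity formula of Theorem \ref{energy}(a). The remaining input is the standard solvability fact that a minimal normal subgroup is an elementary abelian $r$-group; equivalently, one could phrase the contradiction through the Fitting subgroup, noting that the two coprimality constraints force every $O_r(G)$, and hence $F(G) = \prod_{r} O_r(G)$, to be trivial, which is impossible for a nontrivial solvable group.
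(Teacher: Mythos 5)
Your proof is correct, and the reduction step is identical to the paper's: both arguments invoke Theorem \ref{energy}(a) to translate non-singularity of $\Gamma_p(G)$ into the condition $O_{p'}(G)=1$. Where you diverge is in deriving the contradiction from $O_{p'}(G)=O_{q'}(G)=1$. The paper picks the odd prime among $p,q$ (WLOG $p$), observes that $O_p(G)\leq O_{q'}(G)=1$, and then cites the block-theoretic Corollary \ref{almost}(g) (the text misattributes it to ``Theorem \ref{main}(g)''), which says that $\Gamma_p(G)$ is singular whenever $p$ is odd and $O_p(G)=1$; that corollary in turn rests on covering theory of blocks and, in general, on the classification-dependent Lemma \ref{simple}. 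You instead give a purely group-theoretic argument: a minimal normal subgroup $N$ of the solvable group $G$ is an elementary abelian $r$-group, and the two conditions $O_{p'}(G)=1$ and $O_{q'}(G)=1$ force $r=p$ and $r=q$ simultaneously, which is absurd; your Fitting-subgroup reformulation ($F(G)=\prod_r O_r(G)=1$ is impossible for a nontrivial solvable group) is an equally valid packaging of the same idea. Your route is more elementary and self-contained once Theorem \ref{energy}(a) is granted, exploiting solvability directly rather than routing through the machinery of Section 3; the paper's route has the merit of illustrating a further application of its singularity criteria, and its key step (Corollary \ref{almost}(g)) applies to arbitrary finite groups, not just solvable ones.
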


The paper is organized as follows. Block theory of finite groups plays an essential role in this paper. At Section 2, we follow \cite{nav} to introduce the notion of a $p$-block of a finite group $G$ and then we present some relevant facts on it. At Section 3, we employ the character theory of finite groups to study the nullity of Cayley graphs on finite groups generated by $p$-singular elements. Then we find several ways to construct a singular graph. Finally, at section 4, we prove our main results.
%%%%%%%%%%%%%%%%%%%%%%%%%%%%%%%%%%%%%%
\section{$p$-blocks of finite groups}
$\noindent$In this section, we wish to present some facts on $p$-blocks of finite groups. Our definitions and notations for blocks of finite groups closely follow Navarro's book \cite{nav}. Let $R$ be the ring of algebraic integers in $\mathbb{C}$.  We fix a prime $p$ and we choose a maximal ideal $M$ of $R$ containing $pR$. Let $F=R/M$ be a field of characteristic $p$, and let $*:R\rightarrow F$ be the natural ring homomorphism. The set of all irreducible ordinary characters and irreducible Brauer characters of $G$ are denoted by $\mathrm{Irr}(G)$ and $\mathrm{IBr}(G)$, respectively. A \textit{$p$-regular} element of $G$ is an element $g\in G$ such that the order of $g$ is not divisible by $p$. The set of all $p$-regular elements of $G$ is denoted by $G^0$.

If $\chi\in \mathrm{Irr}(G)$, it is well known that $\chi$ uniquely determines an algebra homomorphism $w_\chi:Z(\mathbb{C}G)\rightarrow \mathbb{C}$. If $K$ is a conjugacy class of $G$, $x_K\in K$ and $\hat{K}=\sum_{x\in K} x$, then
$w_\chi(\hat{K})=|K|\chi(x_K)/\chi(1)$. This induces an  algebra  homomorphism $\lambda_\chi: Z(FG)\rightarrow F$ by setting $\lambda_\chi(\hat{K})=w_\chi(\hat{K})^*$, for every conjugacy class $K$ of $G$. If $\varphi \in \mathrm{IBr}(G)$ and $\mathcal{X}$ is an irreducible $F$-representation of $G$ affording $\varphi$, then for every conjugacy class $K$ of $G$, we have $\mathcal{X}(\hat{K})=\lambda_\varphi(\hat{K})I_n$. It defines an algebra homomorphism $\lambda_\varphi:Z(FG)\rightarrow F$.

The \textit{$p$-blocks} of $G$ are the equivalence classes in $\mathrm{Irr}(G)\cup \mathrm{IBr}(G)$
under the relation $\chi \sim \varphi$ if $\lambda_\chi=\lambda_\varphi$, for $\chi,\varphi\in \mathrm{Irr}(G)\cup \mathrm{IBr}(G)$. Note that the block $B$ containing $\chi$ is uniquely determined by the algebra homomorphism $\lambda_B:=\lambda_\chi$.
If $B$ is a $p$-block of $G$, $\mathrm{Irr}(B)$ denotes the set $B\cap \mathrm{Irr}(G)$.
 Also the set of $p$-blocks of $G$ is denoted by $\mathrm{Bl}(G)$.
 The \textit{principal $p$-block} of $G$, denoted by $B_0$, is the unique block of $G$ which contains the principal character $1_G$.

For a positive integer $n$ and a prime $p$, the $p$-part of $n$ is denoted by $n_p$. If $B$ is a $p$-block of $G$, the \textit{defect} of the block $B$ is defined as the integer $d(B)$ satisfying $p^{a-d(B)}=Min \{\chi(1)_p|\chi \in \mathrm{Irr}(B)\}$, where $|G|_p=p^a$. For $\chi\in \mathrm{Irr}(B)$, the \textit{height of $\chi $} is the positive integer $h$ satisfying $\chi(1)_p=p^{a-d(B)+h}$.

 We now work toward a characterization for $\mathrm{Irr}(B_0)$. Let $\chi,\psi\in \mathrm{Irr}(G)$. It is written by $\chi\leftrightarrow \psi$ if $\sum_{x\in G^0}\chi(x)\psi(x^{-1})\neq 0$.
\begin{theorem}\label{character}\cite[Theorem 3.19]{nav}
The connected components of the graph in $\mathrm{Irr}(G)$ defined by $\leftrightarrow$ are exactly the sets $\mathrm{Irr}(B)$, for $B\in \mathrm{Bl}(G)$.
\end{theorem}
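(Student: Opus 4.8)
The plan is to translate the combinatorial relation $\leftrightarrow$ into the language of the decomposition and Cartan matrices and then read off its connected components from the block structure of those matrices. Write $D=(d_{\chi\varphi})$ for the decomposition matrix ($\chi\in\mathrm{Irr}(G)$, $\varphi\in\mathrm{IBr}(G)$), $C=D^{t}D$ for the Cartan matrix, and $\Phi_{\varphi}=\sum_{\chi}d_{\chi\varphi}\chi$ for the projective indecomposable characters. First I would record the standard facts $\chi(x)=\sum_{\varphi}d_{\chi\varphi}\varphi(x)$ for $x\in G^{0}$ together with the duality $\langle\Phi_{\mu},\varphi_{\nu}\rangle^{0}=\delta_{\mu\nu}$, where $\langle f,h\rangle^{0}=\tfrac{1}{|G|}\sum_{x\in G^{0}}f(x)\overline{h(x)}$. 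These yield $\langle\varphi_{\mu},\varphi_{\nu}\rangle^{0}=(C^{-1})_{\mu\nu}$ and hence $\tfrac{1}{|G|}\sum_{x\in G^{0}}\chi(x)\psi(x^{-1})=(DC^{-1}D^{t})_{\chi\psi}$. In other words $\chi\leftrightarrow\psi$ precisely when the $(\chi,\psi)$-entry of $P:=DC^{-1}D^{t}$ is nonzero, and $P$ is the matrix, in the orthonormal basis $\mathrm{Irr}(G)$, of the orthogonal projection of the space of class functions onto the subspace $\mathcal{V}=\langle\Phi_{\varphi}\rangle$ of class functions vanishing on the $p$-singular elements.

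One inclusion then comes for free. Since $d_{\chi\varphi}=0$ whenever $\chi$ and $\varphi$ lie in different blocks, the matrices $D$, $C$, $C^{-1}$ and therefore $P$ are block-diagonal with respect to the partition of $\mathrm{Irr}(G)$ into blocks; equivalently $\mathcal{V}=\bigoplus_{B}\mathcal{V}_{B}$ with $\mathcal{V}_{B}=\langle\Phi_{\varphi}:\varphi\in\mathrm{IBr}(B)\rangle\subseteq\langle\mathrm{Irr}(B)\rangle$. Consequently $P_{\chi\psi}=0$ whenever $\chi$ and $\psi$ lie in distinct blocks, so every connected component of $\leftrightarrow$ is contained in a single set $\mathrm{Irr}(B)$.

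The substantial direction is that each $\mathrm{Irr}(B)$ is itself connected, and here I would argue by contradiction. Suppose $\mathrm{Irr}(B)=L\sqcup L'$ with no edge between $L$ and $L'$. Then the diagonal block $P_{B}$ splits further as $P_{L}\oplus P_{L'}$, so the projection respects the coordinate splitting and $\mathcal{V}_{B}=\mathcal{V}_{L}\perp\mathcal{V}_{L'}$ with $\mathcal{V}_{L}\subseteq\langle\mathrm{Irr}(L)\rangle$. By Osima's theorem the class function $f_{B}=\sum_{\chi\in\mathrm{Irr}(B)}\chi(1)\chi$ vanishes on the $p$-singular elements, i.e. $f_{B}\in\mathcal{V}_{B}$; reading off its $L$-component gives $f_{L}:=\sum_{\chi\in L}\chi(1)\chi\in\mathcal{V}_{L}$, so $f_{L}$ too vanishes on all $p$-singular elements. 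From here I would show that $e_{L}:=\sum_{\chi\in L}e_{\chi}=\tfrac{1}{|G|}\sum_{g\in G}f_{L}(g^{-1})g$ is a $p$-integral central idempotent, i.e. $e_{L}\in Z(\mathcal{O}G)$ where $\mathcal{O}=R_{M}$; since $\varnothing\neq L\subsetneq\mathrm{Irr}(B)$, this contradicts the primitivity of the block idempotent $e_{B}=\sum_{\chi\in\mathrm{Irr}(B)}e_{\chi}$. Thus no such splitting exists, $\mathrm{Irr}(B)$ is connected, and combined with the preceding paragraph the components are exactly the sets $\mathrm{Irr}(B)$.

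I expect the crux, and the main obstacle, to be precisely the integrality claim of the last step: passing from ``$f_{L}$ vanishes on $p$-singular elements'' to ``$e_{L}\in Z(\mathcal{O}G)$.'' Vanishing on the $p$-singular classes makes the coefficients of $e_{L}$ automatically integral there, but the delicate point is that the $p$-regular values of $f_{L}$ must be divisible by $|G|_{p}$, and individual projective characters $\Phi_{\varphi}$ need not enjoy this divisibility; it is the special weighting by the degrees $\chi(1)$, together with the full strength of Osima's theorem (the characterization of sums of block idempotents as exactly the $p$-integral central idempotents supported on the $p$-regular classes), that forces it. I would therefore either invoke that characterization directly, or, equivalently, extract from the assumed disconnection an integer idempotent on $\mathrm{IBr}(B)$ that is self-adjoint for the Cartan form $C$ and use the indecomposability of the block to reach the contradiction.
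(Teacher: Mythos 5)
First, a point of reference: the paper does not prove this statement at all --- it is quoted from Navarro's book as \cite[Theorem 3.19]{nav} --- so there is no in-paper argument to compare you against, only the standard reference proof. Your setup is correct and is in fact essentially that standard route: the identity $\frac{1}{|G|}\sum_{x\in G^{0}}\chi(x)\psi(x^{-1})=(DC^{-1}D^{t})_{\chi\psi}$, the identification of $P=DC^{-1}D^{t}$ with the orthogonal projection onto the span of the $\Phi_{\varphi}$, the block-diagonality of $D$ (hence of $P$) giving the inclusion of components into blocks, and the reduction of the converse, via Osima's vanishing of $f_{B}$ on $\Omega_p(G)$, to showing that a disconnection $\mathrm{Irr}(B)=L\sqcup L'$ forces $e_{L}=\sum_{\chi\in L}e_{\chi}\in Z(\mathcal{O}G)$, contradicting primitivity of $e_{B}$. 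All of those steps check out, including the deduction that $f_{L}$ itself vanishes on $\Omega_p(G)$.

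The gap is exactly where you place it, and it is not closed. Knowing that $f_{L}$ vanishes on the $p$-singular elements does not give $e_{L}\in Z(\mathcal{O}G)$: the coefficient of a $p$-regular $g$ in $e_{L}$ is $f_{L}(g^{-1})/|G|$, and you still owe the divisibility of $f_{L}(g^{-1})$ by $|G|_{p}$, which is the entire content of the hard direction. Neither of your two proposed fixes closes this. Invoking ``sums of block idempotents are exactly the $p$-integral central idempotents supported on $G^{0}$'' is circular, since $p$-integrality is a hypothesis of that characterization and is precisely what is missing; you would need the stronger (and nonstandard) claim that support on $G^{0}$ alone suffices, which itself would require proof. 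The second fix produces $Q=C^{-1}D^{t}E_{L}D$, which is indeed idempotent (using $E_{L}P_{B}=P_{B}E_{L}$) and $C$-self-adjoint, but it is a priori only rational with $p$-power denominators, so ``integer idempotent'' is again an unproved assertion, and connectedness of the Cartan matrix of $B$ does not by itself exclude nontrivial rational $C$-self-adjoint idempotents. In Navarro's treatment this integrality is exactly what the surrounding machinery of central characters $\omega_{\chi}(\hat{K})\in R$ and the congruences defining blocks is set up to deliver; without importing that, your argument establishes only the easy inclusion and reduces the theorem to its hardest ingredient rather than proving it.
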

 If $\chi \in \mathrm{Irr}(B)$ has height zero, by the paragraph after the proof of \cite[Lemma 3.20]{nav}, we have $\chi\leftrightarrow \psi$ for all $\psi\in \mathrm{Irr}(B)$. Now we are ready to present our characterization.

 \begin{theorem}\label{prin}
 Suppose $\chi\in \mathrm{Irr}(G)$. Then  $\chi\in \mathrm{Irr}(B_0)$, if and only if $\sum_{g\in \Omega_p(G)}\chi(g)\neq 0$.
 \end{theorem}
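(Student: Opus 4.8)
The plan is to split $G$ into its $p$-regular and $p$-singular parts and reduce the claim to a statement about sums of character values over $G^0$, which Theorem~\ref{character} controls. Since $G = G^0 \cup \Omega_p(G)$ is a disjoint union, for every $\chi \in \mathrm{Irr}(G)$ I would write
$$\sum_{g\in \Omega_p(G)}\chi(g) = \sum_{g\in G}\chi(g) - \sum_{g\in G^0}\chi(g).$$
By the first orthogonality relation, $\sum_{g\in G}\chi(g) = |G|\langle \chi, 1_G\rangle$, which equals $|G|$ when $\chi = 1_G$ and $0$ otherwise. Hence for $\chi \neq 1_G$ the identity collapses to $\sum_{g\in\Omega_p(G)}\chi(g) = -\sum_{g\in G^0}\chi(g)$, so it suffices to show that $\sum_{g\in G^0}\chi(g)\neq 0$ exactly when $\chi\in\mathrm{Irr}(B_0)$. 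The case $\chi = 1_G$ I would treat separately, noting that there $\sum_{g\in\Omega_p(G)}\chi(g) = |\Omega_p(G)| > 0$ (as $p\mid |G|$ forces $p$-singular elements to exist, by Cauchy) while $1_G\in\mathrm{Irr}(B_0)$, so both sides of the equivalence hold.

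Next I would connect $\sum_{g\in G^0}\chi(g)$ to the relation $\leftrightarrow$. Taking the trivial character as one of the two arguments, $1_G \leftrightarrow \chi$ means $\sum_{g\in G^0} 1_G(g)\chi(g^{-1}) = \sum_{g\in G^0}\chi(g^{-1})\neq 0$; since inversion maps $G^0$ bijectively onto itself (the inverse of a $p$-regular element is again $p$-regular), $\sum_{g\in G^0}\chi(g^{-1}) = \sum_{g\in G^0}\chi(g)$. Thus the condition $\sum_{g\in G^0}\chi(g)\neq 0$ is literally the statement $1_G\leftrightarrow\chi$.

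The key input is then that $1_G$ has height zero in $B_0$. I would verify this from the definitions: since $1_G\in\mathrm{Irr}(B_0)$ has degree $1$, we get $\min\{\psi(1)_p : \psi\in\mathrm{Irr}(B_0)\} = 1 = p^0$, so $p^{a-d(B_0)} = p^0$ and $B_0$ has full defect $d(B_0) = a$; then $1_G(1)_p = p^0 = p^{a-d(B_0)+h}$ forces $h = 0$. By the remark following \cite[Lemma~3.20]{nav}, a height-zero character is joined by $\leftrightarrow$ to every character in its block, so $1_G\leftrightarrow\psi$ for all $\psi\in\mathrm{Irr}(B_0)$; combined with the previous paragraph this gives $\sum_{g\in G^0}\psi(g)\neq 0$ whenever $\psi\in\mathrm{Irr}(B_0)$. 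Conversely, if $\chi\notin\mathrm{Irr}(B_0)$ then $\chi$ and $1_G$ lie in different connected components of the $\leftrightarrow$-graph by Theorem~\ref{character}, so in particular $1_G\not\leftrightarrow\chi$, giving $\sum_{g\in G^0}\chi(g) = 0$. Feeding these two implications back into the reduction of the first paragraph completes the proof.

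The main obstacle I anticipate is justifying the \emph{direct edge} $1_G\leftrightarrow\psi$ for every $\psi\in\mathrm{Irr}(B_0)$: Theorem~\ref{character} by itself only yields that members of a single block are connected by a \emph{path} in the $\leftrightarrow$-graph, whereas the argument needs an actual edge emanating from $1_G$ so that the trivial character can be used as the second factor. This is exactly what the height-zero property of $1_G$ supplies via the remark after \cite[Lemma~3.20]{nav}, so the crux is pinning down that $1_G$ has height zero (equivalently, that $B_0$ has full defect), after which the rest is bookkeeping with the orthogonality relation.
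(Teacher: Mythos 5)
Your proposal is correct and follows essentially the same route as the paper: decompose $\sum_{g\in\Omega_p(G)}\chi(g)$ as $\sum_{g\in G}\chi(g)-\sum_{g\in G^0}\chi(g)$, use the orthogonality relation to kill the first sum for $\chi\neq 1_G$, and identify the nonvanishing of $\sum_{g\in G^0}\chi(g)$ with the edge $\chi\leftrightarrow 1_G$, which the height-zero property of $1_G$ (via the remark after \cite[Lemma 3.20]{nav}) supplies for all of $\mathrm{Irr}(B_0)$ and which Theorem \ref{character} rules out for characters outside $B_0$. Your write-up is in fact slightly more careful than the paper's, since you explicitly verify that $1_G$ has height zero and treat the case $\chi=1_G$ separately, whereas the paper leaves both implicit.
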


 \begin{proof}
 Let $\chi\in \mathrm{Irr}(B_0)-\{1_G\}$. Since $1_G\in \mathrm{Irr}(B_0)$ has height zero, by above discussion, we have $\chi \leftrightarrow 1_G$. Also using \cite[corollary 2.14]{isa}, $\sum_{g\in G}\chi(g)=0$. Thus  $\sum_{g\in \Omega_p(G)}\chi(g)\neq 0$. Conversely, if  $\sum_{g\in \Omega_p(G)}\chi(g)\neq 0$, then as $\sum_{g\in G}\chi(g)=0$, we get $\chi\leftrightarrow 1_G$ and applying Theorem \ref{character}, we are done.
 \end{proof}

 If $B$ is a $p$-block of $G$, then the \textit{defect groups} of $B$ are the $p$-subgroups of $G$ uniquely determined by $B$ by applying \cite[Theorem 4.3]{nav}. This is a $G$-conjugacy class of $p$-subgroups of $G$ which is denoted by $\delta(B)$. Fix $D\in \delta(B)$. By \cite[Theorem 4.6]{nav}, $|D|=p^{d(B)}$. Thus as the principal $p$-block $B_0$ is of defect $|G|p$, the set of all defect groups of $B_0$ is precisely the set of all Sylow $p$-subgroups of $G$.

Let $H$ be a subgroup of $G$, $b\in \mathrm{Bl}(H)$  and $\varphi \in b$. Then the algebra homomorphism $\lambda_b:Z(FH)\rightarrow F$ can be extended to a linear map $\lambda_b^G:Z(FG)\rightarrow F$ by setting $\lambda_b^G(\hat{K})= \lambda_b(\sum_{x\in K\cap H}x)$, for every conjugacy class $K$ of $G$. If $\lambda_b^G$ is an algebra homomorphism, then by \cite[Theorem 3.11]{nav}, there exists a unique block $b^G\in \mathrm{Bl}(G)$ such that $\lambda_b^G=\lambda_{b^G}$. In this case, we say that $b^G$ is defined and it is called the \textit{induced block of $b$}.

 The primitive idempotents of $Z(\mathbb{C}G)$ are $\{e_\chi:=\frac{\chi(1)}{|G|}\sum_{g\in G}\chi(g^{-1})g|\chi\in \mathrm{Irr}(G)\}$ (see \cite[Theorem 2.12]{isa}). For $B\in \mathrm{Bl}(G)$, set $f_B:=\sum_{\chi \in \mathrm{Irr}(B)}e_\chi$. If $N$ is a normal subgroup of $G$, then the group $G$ acts on $\mathrm{Bl}(N)$ by conjugation. If $\{b_1, \dots, b_t\}$ is the $G$-orbit of $b\in \mathrm{Bl}(N)$, then the central idempotent $\sum_{i=1}^t f_{b_i}\in Z(\mathbb{C}G)$ is a sum of $\{e_\chi|\;\chi\in \mathrm{Irr}(G)\}$. Thus there exist uniquely determined blocks $B_1,\dots,B_s\in \mathrm{Bl}(G)$ such that $\sum_{i=1}^tf_{b_i}=\sum_{i=1}^sf_{B_i}$. In this case, it is said that \textit{$B_i(1\leq i \leq s)$ covers $b$}.

 Let $B$ be a $p$-block of $G$. It was conjectured by Brauer that $k(B)\leq p^{d(B)}$, where $k(B):=|\mathrm{Irr}(B)|$. This is known as Brauer's $k(B)$-conjecture. In the $p$-solvable case, the conjecture reduces to the so-called "$k(GV)$-problem" (see \cite{nag}). This problem states that $k(GV)\leq |V|$, whenever for some finite group $G$ of order prime to $p$, $V$ is a faithful $F_pG$-module over the prime field $F_p$. $k(GV)$-problem has been completely solved by Gluck et al. \cite{gl}.

 \begin{theorem}\label{sol}
  Brauer's $k(B)$-conjecture is true for $p$-solvable groups.
 \end{theorem}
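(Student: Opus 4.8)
The plan is to deduce the theorem from the two ingredients assembled in this section: the classical reduction, due to Nagao \cite{nag}, of Brauer's $k(B)$-conjecture for $p$-solvable groups to the $k(GV)$-problem, together with the affirmative solution of the latter in \cite{gl}. Throughout, let $B$ be a $p$-block of the $p$-solvable group $G$ with defect group $D$, so that $|D|=p^{d(B)}$ by \cite[Theorem 4.6]{nav}; the goal is to prove $k(B)=|\mathrm{Irr}(B)|\le |D|$.

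First I would carry out the Fong reductions, which replace the pair $(G,B)$ by a more tractable one while preserving both $k(B)$ and the order of a defect group. The starting point is the Fong--Reynolds reduction applied to $N:=O_{p^\prime}(G)$: if $b\in\mathrm{Bl}(N)$ is covered by $B$ and $T$ is the inertia group of $b$ in $G$, then block induction produces a block of $T$ covering $b$ that induces $B$ and gives a height-preserving bijection between its ordinary characters and those of $B$, so $k(B)$ and the defect are unchanged and one may assume $b$ is $G$-invariant. Since $N$ is a $p^\prime$-group, each of its blocks has defect zero; hence the $G$-invariant block $b$ consists of a single, necessarily $G$-invariant, character $\theta\in\mathrm{Irr}(N)$. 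Fong's first reduction then passes to a suitable central extension in which the $p^\prime$-core becomes central with a faithful linear constituent; after factoring this out one may assume $O_{p^\prime}(G)=1$. In this situation $G$ is $p$-constrained, $O_p(G)$ is self-centralizing, and the reduction transforms the target inequality $k(B)\le|D|$ into an estimate $k(L)\le|V|$ for a semidirect product $L=V\rtimes H$, where $V=O_p(L)$ is an elementary abelian $p$-group with $|V|=|D|=p^{d(B)}$ and $H=L/V$ is a $p^\prime$-group acting faithfully on $V$.

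It then remains to establish the purely module-theoretic estimate. In $L=V\rtimes H$ the normal subgroup $V$ is a faithful $F_pH$-module over the prime field, and $k(L)$ is the number of its irreducible characters. This is exactly the quantity denoted $k(HV)$ in the $k(GV)$-problem, which by Gluck et al. \cite{gl} satisfies $k(HV)\le|V|$. Since $|V|=p^{d(B)}$ and the Fong reductions have left $k(B)$ and $|D|$ unchanged, tracing the chain backwards yields $k(B)\le p^{d(B)}$, as required.

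The genuine difficulty lies entirely in this last step. The inequality $k(GV)\le|V|$ for a coprime faithful action is the deep statement: its proof runs through a long reduction to almost quasi-simple configurations and ultimately relies on the classification of finite simple groups, which is why I would invoke \cite{gl} rather than attempt it directly. By contrast, the block-theoretic reductions are standard and essentially formal; the only points demanding genuine care are the bookkeeping that they preserve $k(B)$ together with $|D|$, and the verification that they deliver the defect group in the elementary abelian module form $V$ on which the $k(GV)$-bound applies with the matching value $p^{d(B)}$.
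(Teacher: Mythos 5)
Your proposal is correct and takes essentially the same route the paper intends: the paper gives no proof of this theorem at all beyond the preceding paragraph, which cites Nagao's reduction \cite{nag} of Brauer's $k(B)$-conjecture for $p$-solvable groups to the $k(GV)$-problem and the solution of that problem by Gluck, Magaard, Riese and Schmid \cite{gl}. Your elaboration of the Fong--Reynolds and Fong reductions is a faithful sketch of what \cite{nag} actually carries out, so there is nothing to fault.
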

 %%%%%%%%%%%%%%%%%%%%%%%%%%%%%%%%%%%%%%
\section{The nullity of cayley graphs on finite groups generated by $p$-singular elements}
$\noindent$It is well known that the eigenvalues of a normal Cayley graph  $\mathrm{Cay}(G,S)$ can be expressed in terms of the irreducible characters of the group $G$ \cite[p.235]{eigen}.

\begin{theorem}\label{eigen}(\cite{2}, \cite{6}, \cite{17}, \cite{19})
The eigenvalues of a normal Cayley graph $\mathrm{Cay}(G,S)$
are given by $\eta_\chi=\frac{1}{\chi(1)}\sum_{a\in S}\chi(a)$ where
 $\chi$ ranges over $\mathrm{Irr}(G)$. Moreover,
  the multiplicity of $\eta_{\chi}$ is $\chi(1)^2$.
\end{theorem}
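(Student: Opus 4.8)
The plan is to realize the adjacency matrix of $\mathrm{Cay}(G,S)$ as the operator of multiplication by the group-algebra element $\hat{S}=\sum_{a\in S}a$, and then to diagonalize this operator using the Wedderburn decomposition of the regular representation of $G$. First I would set up the identification: identify the vertex set $G$ with the standard basis $\{e_g\}_{g\in G}$ of $\mathbb{C}G$. By the adjacency rule (the vertices $u$ and $\nu$ are joined precisely when $\nu u^{-1}\in S$), the adjacency matrix $A$ satisfies $Ae_u=\sum_{a\in S}e_{au}$, which is exactly the action of left multiplication by $\hat{S}$. Thus $A=L(\hat{S})$, where $L$ denotes the left regular representation of $G$ extended linearly to $\mathbb{C}G$.

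Next I would exploit the normality hypothesis. Since $S$ is closed under conjugation, it is a union of conjugacy classes, so $\hat{S}=\sum_i \hat{K_i}$ is a sum of class sums and therefore lies in the center $Z(\mathbb{C}G)$. The regular representation decomposes as $\mathbb{C}G\cong\bigoplus_{\chi\in\mathrm{Irr}(G)}\chi(1)\,V_\chi$, where $V_\chi$ is the irreducible module affording $\chi$, occurring with multiplicity $\chi(1)$. Because $\hat{S}$ is central, Schur's lemma forces it to act as a scalar on each $V_\chi$; invoking the central-character formula $w_\chi(\hat{K})=|K|\chi(x_K)/\chi(1)$ recalled in Section~2 and summing over the classes $K_i$ comprising $S$, this scalar is $\frac{1}{\chi(1)}\sum_i |K_i|\chi(x_{K_i})=\frac{1}{\chi(1)}\sum_{a\in S}\chi(a)=\eta_\chi$.

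Finally I would read off the spectrum. On the isotypic component $\chi(1)\,V_\chi$, which has dimension $\chi(1)\cdot\chi(1)=\chi(1)^2$, the operator $A=L(\hat{S})$ acts as the scalar $\eta_\chi$ times the identity; hence $\eta_\chi$ is an eigenvalue of $A$ of multiplicity $\chi(1)^2$. Since $\sum_{\chi\in\mathrm{Irr}(G)}\chi(1)^2=|G|$ accounts for all $|G|$ eigenvalues of the matrix $A$, the collection $\{\eta_\chi\}_{\chi\in\mathrm{Irr}(G)}$ together with these multiplicities is the complete spectrum.

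I expect the only genuinely delicate point to be the bookkeeping in the last step, namely keeping separate the \emph{dimension} $\chi(1)$ of $V_\chi$ and the \emph{multiplicity} $\chi(1)$ with which $V_\chi$ enters the regular representation, whose product is what yields $\chi(1)^2$. It is also worth recording, as a sanity check on reality of the $\eta_\chi$, that the hypothesis $S=S^{-1}$ makes $A$ a real symmetric matrix, so its eigenvalues are automatically real; correspondingly $\chi(a^{-1})=\overline{\chi(a)}$ together with $S=S^{-1}$ gives $\overline{\eta_\chi}=\eta_\chi$.
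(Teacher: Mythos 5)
Your proposal is correct. Note, however, that the paper offers no proof of this statement at all: Theorem \ref{eigen} is quoted as a known result from the cited references (Babai, Diaconis--Shahshahani, Lubotzky, Ram Murty) and from \cite[p.~235]{eigen}. Your argument --- identifying $A(\mathrm{Cay}(G,S))$ with left multiplication by the central element $\hat{S}=\sum_{a\in S}a$, applying Schur's lemma on each isotypic component of the regular representation, and reading off the scalar $\frac{1}{\chi(1)}\sum_{a\in S}\chi(a)$ from the central character --- is precisely the standard proof given in those sources, including the dimension count $\chi(1)\cdot\chi(1)=\chi(1)^2$ and the completeness check $\sum_{\chi}\chi(1)^2=|G|$, so there is nothing to fault.
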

By Theorem \ref{eigen}, a normal Cayley graph $\mathrm{Cay}(G,S)$ is singular if and only if there is $\chi\in \mathrm{Irr}(G)$ such that $\sum_{g\in S} \chi(g)=0$. Starting from this observation, Siemons and Zalesski \cite{sz} obtained several results on the singularity of some connected normal Cayley graphs on non-abelian simple groups.  In \cite[Theorem 1.5]{sz}, it has been shown that the Cayley graph  $\Gamma_p(G)$  is singular for all finite non-abelian simple groups with some exceptions. In this section, we wish to obtain a necessary and sufficient condition for singularity of the Cayley graph $\Gamma_p(G)$ on an arbitrary finite group $G$.

\begin{theorem}\label{main}
Let $G$ be a finite group and $p\in \pi(G)$.
\begin{itemize}
\item[a)] For the group $G$, the following conditions are equivalent:
\begin{itemize}
\item[i)] $\Gamma_p(G)$ is a singular graph.
\item[ii)] $|\mathrm{Bl}(G)|\geq 2$.
\end{itemize}
\item[b)]  $\eta(\Gamma_p(G))=|G|-\sum_{\chi \in \mathrm{Irr}(B_0)}\chi(1)^2$.
\item[c)] $\Gamma_p(G)$ is integral.
\end{itemize}
\end{theorem}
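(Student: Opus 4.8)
The starting observation is that $\Omega_p(G)$ is closed under conjugation and under inversion, because the order of a group element is preserved by both operations; hence $\Gamma_p(G)$ is a normal Cayley graph and Theorem \ref{eigen} applies. Its eigenvalues are thus $\eta_\chi=\frac{1}{\chi(1)}\sum_{a\in\Omega_p(G)}\chi(a)$, each occurring with multiplicity $\chi(1)^2$, as $\chi$ runs over $\mathrm{Irr}(G)$. The entire proof then hinges on translating the vanishing of $\eta_\chi$ into block theory through Theorem \ref{prin}: since $\chi(1)\neq 0$, we have $\eta_\chi=0$ exactly when $\sum_{a\in\Omega_p(G)}\chi(a)=0$, which by Theorem \ref{prin} occurs if and only if $\chi\notin\mathrm{Irr}(B_0)$.

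For part (a) I would argue that $\Gamma_p(G)$ is singular if and only if some eigenvalue $\eta_\chi$ vanishes, i.e. if and only if there is a character $\chi\in\mathrm{Irr}(G)$ with $\chi\notin\mathrm{Irr}(B_0)$. Since every $p$-block contains at least one ordinary irreducible character, this is equivalent to $\mathrm{Irr}(B_0)\subsetneq\mathrm{Irr}(G)$, and hence to $|\mathrm{Bl}(G)|\geq 2$. For part (b), the nullity equals the total multiplicity of the eigenvalue $0$, namely $\sum_{\chi:\,\eta_\chi=0}\chi(1)^2=\sum_{\chi\notin\mathrm{Irr}(B_0)}\chi(1)^2$; combining this with the identity $\sum_{\chi\in\mathrm{Irr}(G)}\chi(1)^2=|G|$ yields $\eta(\Gamma_p(G))=|G|-\sum_{\chi\in\mathrm{Irr}(B_0)}\chi(1)^2$.

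Part (c) is where the substantive work lies. Because $A(\Gamma_p(G))$ is a symmetric integer matrix, each eigenvalue $\eta_\chi$ is automatically an algebraic integer, so it will suffice to prove that every $\eta_\chi$ is rational. I would do this by a Galois-invariance argument. Fix $\sigma\in\mathrm{Gal}(\mathbb{Q}(\zeta_{|G|})/\mathbb{Q})$; it acts as $\zeta_{|G|}\mapsto\zeta_{|G|}^{k}$ for some integer $k$ coprime to $|G|$, and accordingly $\sigma(\chi(a))=\chi(a^{k})$ for all $a$. The power map $a\mapsto a^{k}$ is a bijection of $G$ (with inverse $a\mapsto a^{k'}$, where $kk'\equiv 1\pmod{|G|}$) and it preserves the order of every element, so it restricts to a bijection of $\Omega_p(G)$. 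Consequently $\sigma(\eta_\chi)=\frac{1}{\chi(1)}\sum_{a\in\Omega_p(G)}\chi(a^{k})=\frac{1}{\chi(1)}\sum_{b\in\Omega_p(G)}\chi(b)=\eta_\chi$; thus each $\eta_\chi$ is fixed by the full Galois group and therefore lies in $\mathbb{Q}$. A rational algebraic integer is an integer, so every $\eta_\chi\in\mathbb{Z}$ and $\Gamma_p(G)$ is integral.

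The step demanding the most care is the verification underlying part (c) that $\Omega_p(G)$ is stable under the power maps $a\mapsto a^{k}$ with $\gcd(k,|G|)=1$: one must observe that for such $k$ one has $\gcd(k,\mathrm{ord}(a))=1$ since $\mathrm{ord}(a)\mid|G|$, whence $\mathrm{ord}(a^{k})=\mathrm{ord}(a)$, so that $p$-singularity is preserved and the map is a genuine bijection of $G$. Granting this, parts (a)--(c) follow as direct applications of Theorems \ref{eigen} and \ref{prin}.
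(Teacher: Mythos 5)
Your proof is correct, and for parts (a) and (b) it coincides with the paper's argument: both reduce singularity and nullity to the vanishing of the character sums $\sum_{a\in\Omega_p(G)}\chi(a)$ via Theorem \ref{eigen} and then translate this into block-theoretic terms via Theorem \ref{prin}. (You are in fact slightly more careful than the paper in the converse direction of (a), noting explicitly that every $p$-block contains an ordinary irreducible character, which is the standard fact needed to pass from $|\mathrm{Bl}(G)|\geq 2$ to the existence of some $\chi\in\mathrm{Irr}(G)\setminus\mathrm{Irr}(B_0)$.) The only genuine divergence is part (c): the paper disposes of integrality with a one-line citation of Konstantinova and Lytkina \cite{ko}, whereas you prove it from scratch by a Galois-invariance argument --- the eigenvalues are algebraic integers as roots of the characteristic polynomial of an integer matrix, and they are rational because each $\sigma\in\mathrm{Gal}(\mathbb{Q}(\zeta_{|G|})/\mathbb{Q})$ permutes the terms of $\sum_{a\in\Omega_p(G)}\chi(a)$ via the order-preserving bijection $a\mapsto a^{k}$. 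This is essentially the standard proof of the cited result, the key hypothesis being exactly the one you verify, namely that $\Omega_p(G)$ is closed under the power maps $a\mapsto a^{k}$ with $\gcd(k,|G|)=1$; your version buys self-containedness at the cost of a few extra lines, and both routes are valid.
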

\begin{proof} \textbf{a)} Suppose $\Gamma_p(G)$ is singular. Then using Theorem \ref{eigen}, there exists $\chi\in \mathrm{Irr}(G)$ such that $\sum_{g\in \Omega_p(G)} \chi(g)=0$.
 Thus by Theorem \ref{prin}, we have  $\chi \notin B_0$. Therefore $|\mathrm{Bl}(G)|\geq 2$. Conversely, assume that $|\mathrm{Bl}(G)|\geq 2$. Then by Theorem \ref{prin}, there exists $\chi \in \mathrm{Irr}(G)$ such that $\sum_{g\in \Omega_p(G)}\chi(g)=0$ and this conclude the singularity of $\Gamma_p(G)$.\\
 \textbf{b)} Using Theorems \ref{eigen} and \ref{prin}, $\eta(\Gamma_p(G))=\sum_{\chi\in \mathrm{Irr}(G)-\mathrm{Irr}(B_0)}\chi(1)^2$.  Also by \cite[Corollary 2.7]{isa}, $|G|=\sum_{\chi \in \mathrm{Irr}(G)} \chi(1)^2$. Hence $\eta(\Gamma_p(G))=|G|-\sum_{\chi \in \mathrm{Irr}(B_0)}\chi(1)^2$.\\
   \textbf{c)} Applying \cite[Theorem 1]{ko}, it is clear.
\end{proof}

Now, we briefly review some notations and definitions from representation theory of symmetric groups \cite{GA}. A \textit{partition} $lamda$ of a positive integer $n$, denoted by $\lambda \vdash n$, is a weakly decreasing finite sequence of positive integers $\lambda=(\lambda_1, \dots, \lambda_l)$ such that $n:=\sum_{i=1}^{l}\lambda_i$. The \textit{diagram} of $\lambda$ is $[\lambda]:=\{(i,j)|\;1\leq i\leq l$, $1\leq j\leq \lambda_i\}$. We call the elements of $[\lambda]$ the \textit{nodes} of $\lambda$.  The \textit{conjugate} of a partition $\lambda$ is the partition $\lambda^\prime$ whose diagram is the transpose of $[\lambda]$.
We denote by $H^\lambda_{i,j}$ the \textit{$(i,j)$-hook} of $[\lambda]$, which consists of the $(i,j)$-node, called the corner of the hook, and all the nodes to the right of it in the same row together with all the nodes lower down and in the same column as the corner. The $(i,\lambda_i)$-node is called the \textit{hand} of the hook and the $(\lambda^{\prime}_j, j)$-node
 is called the \textit{foot} of $H^\lambda_{i,j}$.
  The \textit{hook length} of a node $u=(i,j)\in [\lambda]$ is
 the number of nodes in $H^\lambda_{i,j}$.
  The \textit{rim $q$-hook} of  $[\lambda]$ corresponding to a node $u=(i,j)$ with hook length $q$ consists of the nodes on the rim between the hand and the foot
 of $H^\lambda_{i,j}$, including the hand and the foot nodes.
 The diagram $[\lambda]$ (the  partition $\lambda$) which does not contain any node of hook length $q$ is called a \textit{$q$-core}.  The \textit{$q$-core of $[\lambda]$}, denoted by $[\tilde{\lambda}]$, is a diagram obtained by successive removals of rim $q$-hooks from $[\lambda]$. Each conjugacy class of the symmetric group $S_n$ corresponds
naturally to the partitions of $n$ associated to the cycle structure of that class.   The value of the irreducible character $\chi^\alpha$, labeled by the partition $\alpha$, evaluated at the conjugacy class corresponding to a partition $\beta$ can be calculated recursively by the well known Murnaghan-Nakayama formula \cite{GA}.
\begin{lemma}\label{simple}
Suppose $S$ is a simple group and $p$ is an odd prime dividing the order of $S$. Then $\Gamma_p(S)$ is singular.
\end{lemma}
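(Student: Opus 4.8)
The plan is to translate singularity into a statement about the number of $p$-blocks and then to produce a non-principal block. By part (a) of Theorem \ref{main}, $\Gamma_p(S)$ is singular precisely when $|\mathrm{Bl}(S)|\geq 2$, so it suffices to exhibit a $p$-block of $S$ different from the principal block $B_0$. Here $S$ is tacitly non-abelian: for $S\cong C_p$ one has $\Omega_p(S)=S\setminus\{1\}$, whence $\Gamma_p(S)=K_p$ is nonsingular, so the intended setting is that of non-abelian simple groups, exactly as in the Siemons--Zalesski result quoted above. Since $p\mid|S|$ and $S$ is simple we have $O_{p'}(S)=1$, and by the remarks of Section 2 the block $B_0$ has full defect $a$, where $|S|_p=p^{a}$ with $a\geq 1$.

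The cleanest way to produce a second block is through a block of defect zero. If $S$ has a $p$-block $B$ of defect zero, then $d(B)=0\neq a=d(B_0)$, so $B\neq B_0$ and hence $|\mathrm{Bl}(S)|\geq 2$. By the theorem of Granville and Ono, which rests on the classification of finite simple groups, every non-abelian simple group possesses a $p$-block of defect zero for every prime $p\geq 5$. This disposes of all odd primes $p\geq 5$ at once, with no further work.

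It remains to treat $p=3$, where a few simple groups have no $3$-block of defect zero. By Granville--Ono these exceptions form an explicit finite list together with the alternating groups $A_n$ for which no partition of $n$ is a $3$-core. For the latter I would argue directly with the combinatorics of cores reviewed above: by the Nakayama classification two irreducible characters $\chi^{\alpha},\chi^{\beta}$ of $S_n$ lie in the same $3$-block if and only if $[\alpha]$ and $[\beta]$ have the same $3$-core, so the $3$-blocks of $S_n$ are indexed by the $3$-cores $[\gamma]$ with $|\gamma|\leq n$ and $|\gamma|\equiv n\pmod 3$. For every $n\geq 5$ one can write down two such cores of different weight, for instance $(1)$ and $(3,1)$ when $n\equiv 1$, $(2)$ and $(3,1,1)$ when $n\equiv 2$, and $\varnothing$ and $(4,2)$ when $n\equiv 0\pmod 3$, giving $|\mathrm{Bl}(S_n)|\geq 2$. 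Since $[S_n:A_n]=2$ is prime to $3$ and these two blocks have different weight, hence defect groups of different order, Clifford theory of blocks for the normal subgroup $A_n\trianglelefteq S_n$ (Fong--Reynolds) forces them to cover distinct $3$-blocks of $A_n$, so $|\mathrm{Bl}(A_n)|\geq 2$ as well. The finitely many remaining exceptional simple groups are then settled one at a time from their known $3$-block distributions.

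The main obstacle is precisely this $p=3$ analysis: the preliminaries on hooks, rim hooks and cores are introduced exactly in order to handle the alternating groups that possess no defect-zero block, such as $A_7$, and the only genuinely delicate point is transferring a multiplicity-of-blocks statement from $S_n$ down to the simple group $A_n$, where one must ensure the two chosen blocks do not merge, which is why I select cores of different weight. Everything else is either a direct appeal to Granville--Ono or a finite verification. The hypothesis that $p$ be odd is essential: for $p=2$ there are genuine exceptions, namely the nonsingular cases found by Siemons and Zalesski, so no such uniform statement can hold.
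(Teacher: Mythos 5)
Your proposal is correct and follows the same overall strategy as the paper: reduce singularity to the existence of a non-principal $p$-block via Theorem \ref{main}(a), invoke Granville--Ono to obtain a defect-zero (hence non-principal) block outside a short list of exceptions, and then treat the $p=3$ exceptions ($Suz$, $Co_3$, and certain $A_n$) separately. The one place you genuinely diverge is the alternating-group case. The paper picks an explicit non-self-conjugate partition ($(n-1,1)$ if $3\nmid n$, $(n-2,2)$ if $3\mid n$) with nontrivial $3$-core, restricts it irreducibly to $A_n$ by \cite[Theorem 2.5.7]{GA}, and cites \cite[Theorem 6.1.46]{GA} to place the restriction in a non-principal $3$-block. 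You instead exhibit two $3$-cores of different weight, hence two blocks of $S_n$ with defect groups of different orders, and descend to $A_n$ by noting that, since $[S_n:A_n]=2$ is prime to $3$, every defect group already lies in $A_n$ and covered blocks inherit it, so the two blocks of $S_n$ must cover distinct blocks of $A_n$. Both routes are sound; yours trades the irreducibility-of-restriction step for a covering/defect-group argument, and your explicit cores $(3,1)$, $(3,1,1)$, $(4,2)$ are available for all $n\geq 7$, which covers the range of the Granville--Ono exceptions (the paper notes $n\geq 7$ there). Your remark that the statement tacitly requires $S$ non-abelian --- it fails for $S\cong C_p$, where $\Gamma_p(S)=K_p$ is nonsingular --- is a fair observation that the paper leaves implicit, consistent with the lemma's only application being to non-abelian simple composition factors in Corollary \ref{almost}(d).
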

\begin{proof}
Using Theorem \ref{main} (a), it suffices to show that $S$ has a non-principal $p$-block. By \cite[Corollary 2]{gr}, except the following cases, the group $S$ has a $p$-block of defect zero which is a non-principal $p$-block of $S$. We now should consider the following cases:\\
\textbf{Case 1.} \textit{$p=3$ and $S$ is isomorphic to one of the sporadic simple groups $Suz$ ore $Co_3$:} Using \cite{atlas} and Theorem \ref{prin}, it is easy to see that $S$ has a non-principal $p$-block.\\
\textbf{Case 2.} \textit{$p=3$ and $S$ is isomorphic to the alternating group $A_n$ with $3n+1=m^2r$ where $r$ is square-free and divisible by some prime $q\equiv 2$ (mod $3$):} It is easy to see that $n\geq 7$. Suppose $n$ is not divisible by $3$.
  Then using \cite[Theorem 2.5.7]{GA}, the restriction of the irreducible character of the symmetric group $S_n$ labeled by the partition $\alpha:=(n-1,1)$ to  $A_n$ is an irreducible character. Thus as $\tilde{\alpha}\neq \tilde{1}_{A_n}$, \cite[Theorem 6.1.46]{GA} implies that the restriction of $[\alpha]$ to $A_n$ belongs to a non-principal $p$-block of $A_n$. Now let $n$ be divisible by $3$ and $\beta:=(n-2,2)$. Then again using \cite[Theorem 2.5.7]{GA} and \cite[Theorem 6.1.46]{GA}, we deduce that the restriction of the irreducible character of the symmetric group $S_n$ labeled by the partition $\beta$ to  $A_n$ is an irreducible character which is not in the principal $p$-block of $A_n$. This completes the proof.
\end{proof}

\begin{lemma}\label{nilpotent}
Let $G$ be a nilpotent group and $p\in \pi(G)$. If $G$ is not a $p$-group, then $\Gamma_p(G)$ is singular.
\end{lemma}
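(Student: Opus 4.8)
The plan is to exhibit an explicit non-principal $p$-block of $G$ and then invoke Theorem \ref{main}(a), which reduces singularity of $\Gamma_p(G)$ to the condition $|\mathrm{Bl}(G)| \geq 2$. First I would use the hypothesis that $G$ is nilpotent to write $G = P \times Q$, where $P$ is the Sylow $p$-subgroup of $G$ and $Q = O_{p'}(G)$ is the direct product of the remaining Sylow subgroups; nilpotency is exactly what guarantees this direct decomposition, and since $G$ is not a $p$-group we have $|Q| > 1$.

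The key structural observation is that $\Omega_p(G)$ itself splits as a direct product. Writing a general element as $g = xy$ with $x \in P$ and $y \in Q$, the order of $g$ equals $\mathrm{lcm}(o(x), o(y))$; as $o(x)$ is a power of $p$ while $o(y)$ is coprime to $p$, the element $g$ is $p$-singular precisely when $x \neq 1$. Hence $\Omega_p(G) = (P \setminus \{1\}) \times Q$.

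Next I would use that every $\chi \in \mathrm{Irr}(G)$ factors as $\chi = \alpha \times \beta$ with $\alpha \in \mathrm{Irr}(P)$ and $\beta \in \mathrm{Irr}(Q)$, so that $\chi(xy) = \alpha(x)\beta(y)$. Choosing $\alpha = 1_P$ and any non-trivial $\beta \in \mathrm{Irr}(Q)$ (which exists because $|Q| > 1$), the sum over $\Omega_p(G)$ factors as
$$\sum_{g \in \Omega_p(G)} \chi(g) = \left(\sum_{x \in P \setminus\{1\}} 1\right)\left(\sum_{y \in Q}\beta(y)\right) = (|P|-1)\,|Q|\,\langle \beta, 1_Q\rangle = 0,$$
by the first orthogonality relation, since $\beta \neq 1_Q$. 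Therefore, by Theorem \ref{prin}, this $\chi$ lies outside $\mathrm{Irr}(B_0)$, so $|\mathrm{Bl}(G)| \geq 2$, and Theorem \ref{main}(a) then yields that $\Gamma_p(G)$ is singular.

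I do not expect a serious obstacle here: the only step that genuinely uses the hypothesis is the product decomposition of $\Omega_p(G)$, which relies on nilpotency (so that orders of commuting $p$- and $p'$-parts combine predictably) and is precisely what makes the character sum factor so cleanly. Everything else is routine bookkeeping with the tensor factorization of $\mathrm{Irr}(P \times Q)$ and orthogonality.
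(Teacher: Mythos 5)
Your proposal is correct and follows essentially the same route as the paper: both decompose $G$ as the direct product of its Sylow $p$-subgroup $P$ with a nontrivial $p'$-complement, take the character $1_P$ tensored with a nontrivial irreducible character of the complement, show the relevant character sum vanishes, and conclude via Theorem \ref{prin} and Theorem \ref{main}(a). The only cosmetic difference is that you evaluate the sum over $\Omega_p(G)$ directly using the factorization $\Omega_p(G)=(P\setminus\{1\})\times Q$, whereas the paper evaluates the sum over the $p$-regular elements and uses orthogonality with $1_G$; these are equivalent.
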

\begin{proof}
Assume that $P$ is a Sylow $p$-subgroup of $G$. Then as $G$ is not a $p$-group, there exists a non-trivial $p^\prime$-subgroup $H$ of $G$ such that $G\cong H \times P$. Now let $\chi\in \mathrm{Irr}(H)$ be a non-trivial irreducible character of $H$. Then $\chi \times 1_P\in \mathrm{Irr}(G)$. Applying \cite[Corollary 2.14]{isa}, it is easy to see that $\sum_{g\in G^0}(\chi \times 1_P)(g)=\sum_{h\in H}\chi(h)=0$. Hence by Theorem \ref{prin}, we deduce that $G$ has a non-principal $p$-block containing $\chi\times 1_P$. Therefore by Theorem \ref{main} (a), the graph $\Gamma_p(G)$ is singular.
\end{proof}

Suppose $G$ is a finite group. The \textit{Fitting subgroup} $\mathrm{F}(G)$ of $G$ is the unique largest normal nilpotent subgroup of $G$. For a subgroup $H$ of $G$, the \textit{sentralizer} and the \textit{normalizer} of $H$ in $G$ are defined as $\mathrm{C}_G(H):=\{g\in G|gh=hg, \, for \,all\,h\in H\}$ and $\mathrm{N}_G(H):=\{g\in G|ghg^{-1}\in H, \, for \,all\,h\in H\}$, respectively. A subgroup $H$ of the group $G$ is \textit{subnormal}, if there exists a finite chain of subgroups of the group $G$, each one normal in the next, beginning at $H$ and ending at $G$. The cyclic group of order $n$ is denoted by $C_n$. Let $p$ be a prime. A \textit{$p$-element} of $G$  is an element $g\in G$ such that the order of $g$ is a power of $p$.  Finally, the \textit{$p$-core} $O_p(G)$ of the group $G$ is the largest normal $p$-subgroup of $G$. We now apply Theorem \ref{main} to obtain new classes of singular Cayley graphs.

\begin{corollary}\label{almost}
Assume that $G$ is a finite group and $p\in \pi(G)$. If one of the following conditions holds, then $\Gamma_p(G)$ is a singular graph;
\begin{itemize}
\item[a)] There exists a non-trivial subnormal subgroup $H$ of $G$ such that either $p$ does not divide the order of $H$, or $\Gamma_p(H)$ is singular.
\item[b)] For some $p$-subgroup $ P$ of $G$, there exists a subgroup $H$ of $G$ such that $P \mathrm{C}_G(P)\subseteq H\subseteq \mathrm{N}_G(P)$ and $\Gamma_p(H)$ is singular.
\item[c)] For some $p$-element $x\in G$, the graph $\Gamma_p(\mathrm{C}_G(x))$ is singular.
\item[d)] $p$ is odd and $G$ has a non-abelian minimal normal subgroup $N$.
\item[e)] $G$ is a non-solvable group with a cyclic Sylow $p$-subgroup.
\item[f)] The Fitting subgroup $\mathrm{F}(G)$ of $G$ is not a $p$-subgroup of $G$.
\item[g)] $p$ is odd and $O_p(G)=1$.
\end{itemize}
\end{corollary}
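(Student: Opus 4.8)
The unifying principle is Theorem \ref{main}(a): in every case it suffices to exhibit a non-principal $p$-block of $G$. Three elementary sources of non-principal blocks will feed the argument: (i) a non-trivial $p^\prime$-group $H$ has $|\mathrm{Irr}(H)|\geq 2$ blocks, since $FH$ is semisimple and each ordinary character is its own defect-zero block; (ii) if $\Gamma_p(H)$ is singular then $H$ has a non-principal block, again by Theorem \ref{main}(a); and (iii) Lemma \ref{simple} together with Theorem \ref{main}(a) supplies a non-principal block in every non-abelian simple group whenever $p$ is odd.

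For (a) the plan is to transport a non-principal block up a subnormal chain $H=H_0\trianglelefteq H_1\trianglelefteq\cdots\trianglelefteq H_n=G$ by block covering: since the principal block of $H_{i+1}$ covers only the principal block of $H_i$, and every block of $H_i$ is covered by some block of $H_{i+1}$, a non-principal block of $H_i$ is covered only by non-principal blocks of $H_{i+1}$. Starting from source (i) (when $p\nmid|H|$) or source (ii) (when $\Gamma_p(H)$ is singular), this yields a non-principal block of $G$. Parts (d), (f), (g) then reduce to (a). For (f), $\mathrm{F}(G)$ is nilpotent and not a $p$-group, so $O_q(G)\neq 1$ for some prime $q\neq p$, a non-trivial normal $p^\prime$-subgroup. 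For (d), a non-abelian minimal normal subgroup $N\cong S^k$ gives a subnormal simple $S$; according to whether $p\mid|S|$ I would feed $S$ (via Lemma \ref{simple}, using $p$ odd) or $N$ (a $p^\prime$-group) into (a). For (g), a minimal normal subgroup is either non-abelian (apply (d)), abelian of order prime to $p$ (apply (a)), or an abelian $p$-group, in which case it lies in $O_p(G)=1$, which is impossible.

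Parts (b) and (c) rest on block induction. The hypothesis $P\mathrm{C}_G(P)\subseteq H\subseteq \mathrm{N}_G(P)$ says exactly that $P\trianglelefteq H$ and $\mathrm{C}_G(P)\subseteq H$; in this situation the induced block $b^G$ is defined for every $b\in\mathrm{Bl}(H)$, and Brauer's Third Main Theorem gives $b^G=B_0(G)$ if and only if $b=B_0(H)$. Since $\Gamma_p(H)$ singular provides a non-principal block $b$ of $H$, its induced block $b^G$ is a non-principal block of $G$. Part (c) is the special case $P=\langle x\rangle$ and $H=\mathrm{C}_G(x)$, for which $P\mathrm{C}_G(P)=\mathrm{C}_G(x)=H\subseteq\mathrm{N}_G(P)$. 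I expect the genuine technical point here to be the verification that $b^G$ is defined for all $b\in\mathrm{Bl}(H)$ under these inclusions, and the correct invocation of the Third Main Theorem.

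Finally (e) combines the above with an induction on $|G|$. A group with cyclic Sylow $2$-subgroups is solvable (a classical consequence of Burnside's normal $p$-complement theorem and the Feit--Thompson theorem), so non-solvability forces $p$ to be odd. Taking a minimal normal subgroup $N$: if $N$ is non-abelian apply (d), and if $N$ is an abelian $p^\prime$-group apply (a). The remaining case is $N$ an abelian $p$-group, necessarily $N\cong C_p$ since the Sylow $p$-subgroups are cyclic. Then $N$ is a normal $p$-subgroup, so the standard bijection $\mathrm{Bl}(G/N)\to\mathrm{Bl}(G)$ (valid for normal $p$-subgroups and preserving principal blocks, as the augmentation ideal of $FN$ is nilpotent) reduces the problem to $G/N$, which is still non-solvable with cyclic Sylow $p$-subgroups: if $p\mid|G/N|$ this is the induction hypothesis, while if $p\nmid|G/N|$ then $G/N$ is a non-trivial $p^\prime$-group and source (i) applies. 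The main obstacle in (e) is organizing the induction so that the quotient step is uniform, together with justifying the block bijection for normal $p$-subgroups.
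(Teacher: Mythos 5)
Your proposal is correct, and for parts (a)--(d), (f), (g) it follows essentially the same route as the paper: reduce everything to exhibiting a non-principal $p$-block via Theorem \ref{main}(a), transport blocks up a subnormal chain by covering (using that the principal block covers only the principal block) for (a), use block induction from $P\mathrm{C}_G(P)\subseteq H\subseteq\mathrm{N}_G(P)$ together with Brauer's Third Main Theorem for (b) and its specialization (c), and feed Lemma \ref{simple} or a normal $p^\prime$-subgroup into (a) for (d), (f), (g). (Your treatment of (f) via $O_q(G)\neq 1$ for some $q\neq p$ is a harmless variant of the paper's appeal to Lemma \ref{nilpotent}; your formulation of the induction in (a), which pushes a non-principal block up one step at a time, is if anything cleaner than the paper's, since it avoids the paper's implicit side case where $p$ does not divide $|H_{k-1}|$.) The genuine divergence is part (e): the paper simply cites Michler's theorem that non-solvable groups with cyclic Sylow $p$-subgroups have non-principal $p$-blocks, whereas you re-derive this by first forcing $p$ odd (cyclic Sylow $2$-subgroups give a normal $2$-complement, hence solvability by Feit--Thompson), then inducting on $|G|$ over a minimal normal subgroup $N$, using the block bijection $\mathrm{Bl}(G/N)\leftrightarrow\mathrm{Bl}(G)$ when $N$ is a normal $p$-subgroup (necessarily of order $p$ here). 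This argument is valid and self-contained modulo Lemma \ref{simple}, which carries the same classification-dependent input (Granville--Ono) that underlies Michler's theorem; the paper's citation is shorter, while your version makes the logical dependencies explicit and avoids an extra external reference.
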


\begin{proof} We carry the proof out by the following cases:\\
\textbf{a)} \textit{There exists a non-trivial subnormal subgroup $H$ of $G$ such that either $p$ does not divide the order of $H$, or $\Gamma_p(H)$ is singular:} Without loss of generality, we can assume that $H$ is a proper subgroup of $G$. Since $H$ is a subnormal subgroup of $G$, for some positive integer $t$, there exist distinct subgroups $H_1, H_2,\dots,H_t$ of $G$ such that $H\triangleleft H_1 \triangleleft H_2 \triangleleft \dots \triangleleft H_t=G$. We do the proof by induction on $t$. Let $t=1$. Then one of the following occurs:\\
\textbf{i)} \textit{$p$ does not divide the order of $H$:} Using \cite[Theorem 2.12]{nav}, $\mathrm{Irr}(H)=\mathrm{IBr}(H)$. Hence as $H$ is non-trivial, \cite[Theorem 3.9]{nav} implies that $|\mathrm{Bl}(H)|=|\mathrm{Irr}(H)|\geq 2$. Suppose $\chi \in \mathrm{Irr}(H)-\{1_H\}$ and $B_0$ is the principal $p$-block of $G$. Then by \cite[Theorem 9.2]{nav}, we deduce that $B_0$ does not cover the block $b:=\{\chi\}$ of $H$. Thus there exists a non-principal block $B$ of $G$ such that $B$ covers $b$. Therefore $|\mathrm{Bl}(G)|\geq 2$ and Theorem \ref{main} (a) completes the proof.\\
\textbf{ii)} \textit{$\Gamma_p(H)$ is singular:} By Theorem \ref{main}, $|\mathrm{Bl}(H)|\geq 2$. Hence $H$ has a non-principal block $b$. Using \cite[Teorem 9.2]{nav}, the block $b$ is not covered by the principal block of $G$. Thus $|\mathrm{Bl}(G)|\geq 2 $ and again, Theorem \ref{main} (a) completes  the proof.\\
Now assume that $k\geq 2$ is a positive integer, $t=k$ and the assertion is true for $k-1$. If $p$ divides the order of $H_{k-1}$, then by induction hypothesis and this fact that $H$ is a non-trivial subnormal subgroup of $H_{k-1}$, we have $\Gamma_p(H_{k-1})$ is singular. Therefore applying the initial case for $H_{k-1}$ and $G$, we deduce that $\Gamma_p(G)$ is singular.\\
\textbf{b)}  \textit{For some $p$-subgroup $ P$ of $G$, there exists a subgroup $H$ of $G$ such that $P \mathrm{C}_G(P)\subseteq H\subseteq \mathrm{N}_G(P)$ and $\Gamma_p(H)$ is singular:} By Theorem \ref{main} (a), the group $H$ has a non-principal $p$-block $b$. Also using \cite[Theorem 4.14]{nav} and Brauer's third main theorem \cite[Theorem 6.7]{nav}, we deduce that $B:=b^G$ is a non-principal $p$-block of $G$. Hence $|\mathrm{Bl}(G)|\geq 2$ and by Theorem \ref{main}, we are done. \\
\textbf{c)} \textit{For some $p$-element $x\in G$, the graph $\Gamma_p(\mathrm{C}_G(x))$ is singular:} By part (b), it is straight forward.\\
\textbf{d)} \textit{$p$ is odd and $G$ has a non-abelian minimal normal subgroup $N$:} If $p$ does not divide the order of $N$, by  part (a), we are done. Hence we can assume that $p$ divides the order of $N$.
Since $N$ is non-abelian, there exists a non-abelian simple subgroup $S$ of $N$ such that $N\cong S^k$, for some positive integer $k$.
 Thus as $S$ is a subnormal subgroup of $G$, using Lemma \ref{simple} and  part (a), we deduce that $\Gamma_p(G)$ is singular.\\
\textbf{e)}\textit{ $G$ is a non-solvable group with a cyclic Sylow $p$-subgroup:} Using \cite{mich} and Theorem \ref{main} (a), we have nothing to prove. \\
\textbf{f)} \textit{The Fitting subgroup $\mathrm{F}(G)$ of $G$ is not a $p$-subgroup of $G$:} By Lemma \ref{nilpotent} and part (a), we are done.\\
\textbf{g)} \textit{$p$ is odd and $O_p(G)=1$:} Let $N$ be a minimal normal subgroup of $G$. Since $O_p(G)$ is the largest normal $p$-subgroup of $G$, the group $N$ is not a $p$-subgroup of  $G$. Thus either $N$ is non-abelian or for some prime $q\neq p$, the group $N$ is a non-trivial elementary abelian $q$-group. In the later case, part (a) implies that the graph $\Gamma_p(G)$ is singular. Now suppose $N$ is non-abelian. Then using part (d), we are done.  \end{proof}
\begin{remark}\label{r}
Assume that $G$ is the sporadic simple group $M_{22}$. Using  \cite{atlas}, we can see that the graph $\Gamma_2(G)$ is not singular. This shows that the assumption "$p$ is odd" is necessary in Corollary \ref{almost} (d).
\end{remark}

To determine the significance of these results, we wish to construct a so large class of integral singular graphs. For this goal, we associate an integral singular graph to each pair $(G,p)$, where $G$ ranges over all finite groups, and $p$ over all prime divisors of the order of $G$.
\begin{corollary}\label{large}
Suppose $G$ is a finite group. Then for every prime $p\in \pi(G)$, the graph $\Gamma_p(G\times C_6)$ is singular.
\end{corollary}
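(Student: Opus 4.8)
The plan is to reduce the claim to the block-theoretic criterion of Theorem \ref{main}(a), which asserts that $\Gamma_p(G\times C_6)$ is singular if and only if $|\mathrm{Bl}(G\times C_6)|\geq 2$. Thus it suffices, for each prime $p\in\pi(G)$, to exhibit a non-principal $p$-block of $G\times C_6$; and rather than constructing such a block by hand, I would invoke the already-proved Corollary \ref{almost}(a), which produces one from a suitable subnormal subgroup.

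The key observation is that $C_6\cong C_2\times C_3$, so $G\times C_6$ contains the two characteristic (hence normal, hence subnormal) subgroups $\{1\}\times C_2$ and $\{1\}\times C_3$, of orders $2$ and $3$ respectively. Since any prime $p$ can coincide with at most one of $2$ and $3$, at least one of these subgroups has order coprime to $p$: explicitly, if $p\neq 2$ I would take $H=\{1\}\times C_2$, and if $p=2$ I would take $H=\{1\}\times C_3$. In either case $H$ is a non-trivial subnormal subgroup of $G\times C_6$ whose order is prime to $p$, so the first alternative in the hypothesis of Corollary \ref{almost}(a) (``$p$ does not divide the order of $H$'') is satisfied, and that corollary immediately gives that $\Gamma_p(G\times C_6)$ is singular.

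There is no genuine obstacle in this argument; the one substantive point—and precisely the reason the auxiliary factor is taken to be $C_6$ rather than a single cyclic group of prime order—is that $6=2\cdot 3$ guarantees a prime-order direct factor coprime to \emph{any} prescribed $p$, which is exactly the input Corollary \ref{almost}(a) needs. I would remark in passing that the resulting graphs are in fact \emph{integral} singular graphs, since integrality holds unconditionally by Theorem \ref{main}(c), matching the stated goal of producing a large family of integral singular graphs.
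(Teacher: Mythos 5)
Your proof is correct, and it is genuinely simpler than the one in the paper. Both arguments funnel through Corollary \ref{almost}(a), but you use only its first alternative: since $C_6\cong C_2\times C_3$, the group $G\times C_6$ always contains a non-trivial normal (hence subnormal) subgroup of order $2$ or $3$ that is coprime to the given prime $p$, and this single observation disposes of every case at once. The paper instead splits into three cases ($p=2$; $p$ odd with $O_p(G)=1$; $p$ odd with $O_p(G)\neq 1$) and routes the first and third through Lemma \ref{nilpotent} (producing a nilpotent normal subgroup $C_6$ or $O_p(G)\times C_2$ that is not a $p$-group, so that the \emph{second} alternative of Corollary \ref{almost}(a) applies) and the second through Corollary \ref{almost}(g). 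Your uniform argument buys brevity and also avoids a delicate point in the paper's middle case: for $p=3$ one has $O_3(G\times C_6)\supseteq\{1\}\times C_3\neq 1$, so Corollary \ref{almost}(g) cannot be applied directly to $G\times C_6$ and must instead be applied to $G$ and then transported via part (a); your route never encounters this issue. What the paper's longer argument buys, if anything, is an illustration of how the various parts of Corollary \ref{almost} interact, but as a proof of the stated corollary your version is preferable. Your closing remark that the graphs obtained are integral by Theorem \ref{main}(c) is also accurate and matches the paper's stated motivation.
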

\begin{proof} If $p=2$, then as the group $G\times C_6$ has a nilpotent normal subgroup isomorphic to $ C_6$, using Lemma \ref{nilpotent} and Corollary \ref{almost} (a), we are done. Thus we can assume that $p$ is odd. If $O_p(G)=1$, then by Corollary \ref{almost} (g), $\Gamma_p(G\times C_6)$ is singular. Now let  $O_p(G)\neq 1$. It is clear that $O_p(G) \times C_2$ is a nilpotent normal subgroup of $G$ which is not a $p$-group. Hence applying Lemma \ref{nilpotent} and Corollary \ref{almost} (a), we deduce that $\Gamma_p(G\times C_6)$ is singular.
\end{proof}
%%%%%%%%%%%%%%%%%%%%%%%%
\section{Proof of our main results}
\noindent In this section, we wish to prove our main results.  Assume that $N$ is a normal subgroup of a group $G$.  When we work with $G/N$, we can identify $\mathrm{Irr}(G/N)$ with $\{\chi \in \mathrm{Irr}(G)|\, N \subseteq \mathrm{Ker}(\chi)\}$, where for every $\chi \in \mathrm{Irr}(G)$, the \textit{kernel of $\chi$} is defined as $\mathrm{Ker}(\chi):=\{g\in G|\, \chi(g)=\chi(1)\}$. For every $\chi \in \mathrm{Irr}(G)$, we set $\lambda_\chi(G,p):=1/\chi(1)|\sum_{x\in \Omega_p(G)}\chi(x)|$. We begin with the following observation.

\begin{lemma}\label{ener}
Let $G$ be a $p$-solvable group, for some $p\in \pi(G)$. Then
$$\mathcal{E}(\Gamma_p(G))=\sum_{\chi\in \mathrm{Irr}(G/O_{p^\prime}(G))}\chi(1)^2\lambda_\chi(G,p).$$
\end{lemma}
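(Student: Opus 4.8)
The plan is to read the energy straight off the character-theoretic description of the spectrum and then discard the terms that vanish. First I would invoke Theorem \ref{eigen}: since $\Omega_p(G)$ is a union of conjugacy classes (conjugation preserves element order) and is inverse-closed (as $o(g)=o(g^{-1})$), the graph $\Gamma_p(G)$ is a normal Cayley graph whose eigenvalues are $\eta_\chi=\frac{1}{\chi(1)}\sum_{a\in\Omega_p(G)}\chi(a)$, each with multiplicity $\chi(1)^2$, as $\chi$ runs over $\mathrm{Irr}(G)$. Because $\Omega_p(G)$ is inverse-closed, each $\eta_\chi$ is real, and comparing with the definition of $\lambda_\chi(G,p)$ gives $|\eta_\chi|=\lambda_\chi(G,p)$. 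Summing the absolute values of the eigenvalues with multiplicities then yields
$$\mathcal{E}(\Gamma_p(G))=\sum_{\chi\in\mathrm{Irr}(G)}\chi(1)^2|\eta_\chi|=\sum_{\chi\in\mathrm{Irr}(G)}\chi(1)^2\lambda_\chi(G,p).$$

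Next I would determine which terms survive. By Theorem \ref{prin}, $\sum_{g\in\Omega_p(G)}\chi(g)\neq 0$ exactly when $\chi\in\mathrm{Irr}(B_0)$; equivalently $\lambda_\chi(G,p)=0$ whenever $\chi\notin\mathrm{Irr}(B_0)$. Hence the sum collapses to
$$\mathcal{E}(\Gamma_p(G))=\sum_{\chi\in\mathrm{Irr}(B_0)}\chi(1)^2\lambda_\chi(G,p).$$
The crux is then to replace the index set $\mathrm{Irr}(B_0)$ by $\mathrm{Irr}(G/O_{p^\prime}(G))$. For this I would establish the containment $\mathrm{Irr}(B_0)\subseteq\mathrm{Irr}(G/O_{p^\prime}(G))$. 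Writing $N:=O_{p^\prime}(G)$, which is a normal $p^\prime$-subgroup, every block of $N$ has defect zero and the principal block of $N$ is the singleton $\{1_N\}$. The principal block $B_0$ of $G$ covers the principal block of $N$, so by the covering theory (\cite[Theorem 9.2]{nav}) every $\chi\in\mathrm{Irr}(B_0)$ lies over $1_N$; as $1_N$ is $G$-invariant this forces $\chi_N=\chi(1)1_N$, i.e. $N\subseteq\mathrm{Ker}(\chi)$ and $\chi\in\mathrm{Irr}(G/N)$. With this inclusion in hand, the characters in $\mathrm{Irr}(G/O_{p^\prime}(G))\setminus\mathrm{Irr}(B_0)$ contribute nothing (their $\lambda_\chi(G,p)$ vanishes), so enlarging the index set from $\mathrm{Irr}(B_0)$ to $\mathrm{Irr}(G/O_{p^\prime}(G))$ leaves the value unchanged, giving the claimed formula.

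I expect the block-covering step to be the main obstacle, since that is where the structural input sits. It is worth noting that this inclusion is valid for an arbitrary finite group, so only the inclusion is needed for the present identity; the $p$-solvability hypothesis is what later upgrades it to the equality $\mathrm{Irr}(B_0)=\mathrm{Irr}(G/O_{p^\prime}(G))$, which is in turn what matches $\sum_{\chi\in\mathrm{Irr}(B_0)}\chi(1)^2$ with $r_p(G)=[G:O_{p^\prime}(G)]$ in Theorem \ref{energy}. Before finalizing I would carefully verify that $\Omega_p(G)$ is genuinely inverse- and conjugation-closed (so that Theorem \ref{eigen} applies and the $\eta_\chi$ are real), and that the identification $\lambda_\chi(G,p)=|\eta_\chi|$ is exact, so that no stray constants enter the summation.
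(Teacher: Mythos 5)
Your proof is correct, and its first two steps (reading the spectrum off Theorem \ref{eigen} for the normal, inverse-closed set $\Omega_p(G)$, then collapsing the sum to $\mathrm{Irr}(B_0)$ via Theorem \ref{prin}) coincide with the paper's. Where you diverge is the final index-set swap: the paper invokes Fong's theorem (\cite[Theorem 10.20]{nav}), which uses $p$-solvability to give the full equality $\mathrm{Irr}(B_0)=\mathrm{Irr}(G/O_{p^\prime}(G))$, and substitutes directly; you instead derive only the inclusion $\mathrm{Irr}(B_0)\subseteq\mathrm{Irr}(G/O_{p^\prime}(G))$ from general covering theory (the principal block of $G$ covers only the principal block $\{1_N\}$ of the $p^\prime$-group $N=O_{p^\prime}(G)$, forcing $N\subseteq\mathrm{Ker}(\chi)$), and then re-enlarge the index set using the vanishing of $\lambda_\chi(G,p)$ off $B_0$. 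Your covering argument is sound, and it buys a slightly stronger conclusion: the displayed identity holds for an arbitrary finite group, with $p$-solvability irrelevant at this stage. The trade-off is that the paper's stronger equality of index sets is needed anyway in the proof of Theorem \ref{energy}(a) (to evaluate $\sum_{\chi\in\mathrm{Irr}(B_0)}\chi(1)^2=r_p(G)$), so the hypothesis cannot ultimately be dispensed with there; but as a proof of this lemma alone, your route is both valid and marginally more general.
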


\begin{proof}
 By Theorems \ref{eigen} and \ref{prin}, we have
\begin{align}
\mathcal{E}(\Gamma_p(G))&=\sum_{\chi \in \mathrm{Irr}(G)}\chi(1)^2 \lambda_\chi(G,p)\nonumber\\
&=\sum_{\chi\in \mathrm{Irr}(G)-\mathrm{Irr}(B_0)}\chi(1)^2\lambda_\chi(G,p)+ \sum_{\chi\in \mathrm{Irr}(B_0)}\chi(1)^2\lambda_\chi(G,p)\nonumber\\
&=\sum_{\chi\in \mathrm{Irr}(B_0)}\chi(1)^2\lambda_\chi(G,p).\nonumber
\end{align}
Applying \cite[Theorem 10.20]{nav}, we deduce that $\mathrm{Irr}(B_0)$ is the set of all irreducible characters of $G$ lying over the trivial character  $1_{O_{p^\prime}(G)}$ and thus $\mathrm{Irr}(B_0)=\mathrm{Irr}(G/O_{p^\prime}(G))$. Hence
$$\mathcal{E}(\Gamma_p(G))=\sum_{\chi\in \mathrm{Irr}(G/O_{p^\prime}(G))}\chi(1)^2\lambda_\chi(G,p)$$
\end{proof}
Now we are ready to prove Theorem \ref{energy}.\\
\noindent\textit{Proof of Theorem \ref{energy}:} a) Applying \cite[Theorem 10.20]{nav}, we deduce that $\mathrm{Irr}(B_0)=\mathrm{Irr}(G/O_{p^\prime}(G))$. Also by \cite[Corollary 2.7]{isa}, $\sum_{\chi \in \mathrm{Irr}(G/O_{p^\prime}(G))}\chi(1)^2=r_p(G)$. Thus using Theorem \ref{main} (b), we are done.\\
b) Using Theorem \ref{main} (c), $\Gamma_p(G)$ is integral.\\
c) Since $\Gamma_p(G)$ is a graph with precisely $c_p(G)$ connected components, using Perron-Frobenius Theorem, we deduce that $d_p(G)$ is an eigenvalue of $\Gamma_p(G)$ with multiplicity $c_p(G)$. Now suppose $M:=\{\chi\in \mathrm{Irr}(G/O_{p^\prime}(G))|\,\lambda_\chi(G,p)=d_p(G)\}$. Using Lemma \ref{ener}, we have
\begin{align}\mathcal{E}(\Gamma_p(G))&=\sum_{\chi\in \mathrm{Irr}(G/O_{p^\prime}(G))}\chi(1)^2\lambda_\chi(G,p)\nonumber\\
&=\sum_{\chi \in M}\chi(1)^2d_p(G)+ \sum_{\chi \in \mathrm{Irr}(G/O_{p^\prime}(G))-M}\chi(1)^2\lambda_\chi(G,p).\nonumber
\end{align}
Now let $\chi \in \mathrm{Irr}(G/O_{p^\prime}(G))-M$. Applying Theorems \ref{eigen} and \ref{prin}, we deduce that $\lambda_\chi(G,p)$ is non-zero. Thus by part (b), we have $\lambda_\chi(G,p)$ is a positive integer. Hence using \cite[Corollary 2.7]{isa},
\begin{align}
\mathcal{E}(\Gamma_p(G))&\geq d_p(G)c_p(G)+ \sum_{\chi \in \mathrm{Irr}(G/O_{p^\prime}(G))-M}\chi(1)^2\nonumber\\
&=c_p(G)(d_p(G)-1)+r_p(G).
\end{align}

Since $\Gamma_p(G)$ is a $d_p(G)$-regular graph of order $|G|$, it is clear that the graph $\Gamma_p(G)$ has $|G|d_p(G)/2$ edges. Hence applying part (a) and \cite[Corollary 4]{An1}, we deduce that
\begin{align}
\mathcal{E}(\Gamma_p(G))\geq \sqrt{|G|d_p(G)+r_p(G)(r_p(G)-1)}.
\end{align}

Hence using inequalities (1) and (2), we conclude that $\mathcal{E}(\Gamma_p(G)$ is at least the maximum of
$$\{c_p(G)(d_p(G)-1)+r_p(G), \sqrt{|G|d_p(G)+r_p(G)(r_p(G)-1)}\}.$$
d) Applying Theorem \ref{main} (b) and \cite[Theorem 3.13]{Cv}, the diameter of $\Gamma_p(G)$ is at most $|\mathrm{Irr}(B_0)|$. Hence as every Sylow $p$-subgroup of $G$ is a defect group of $B_0$, Theorem \ref{sol} completes the proof.\qed\\

 If Brauer's $k(B)$-conjecture (see sect. 2) is true for principal blocks of finite groups, then we can state Theorem \ref{energy} (d) for every finite group. Thus we can propose the following conjecture;

\begin{conjecture}
Suppose $G$ is a finite group. Then for every prime $p\in \pi(G)$, the diameter of $\Gamma_p(G)$ is at most $|G|_p$.
\end{conjecture}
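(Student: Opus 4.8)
The plan is to run the argument already used for the $p$-solvable case in Theorem~\ref{energy}(d) essentially verbatim, the one difference being that the appeal to Theorem~\ref{sol} must be upgraded to the full Brauer $k(B)$-conjecture applied to the principal block $B_0$. The point is that nothing in the spectral half of that argument actually used $p$-solvability: the description of the eigenvalues of $\Gamma_p(G)$ via Theorem~\ref{eigen}, the vanishing criterion $\sum_{g\in\Omega_p(G)}\chi(g)\neq 0\iff\chi\in\mathrm{Irr}(B_0)$ of Theorem~\ref{prin}, and the identification of the defect groups of $B_0$ with the Sylow $p$-subgroups of $G$ are all available for an arbitrary finite group. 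So the whole statement reduces to a single numerical inequality, $k(B_0)\le |G|_p$, where $k(B_0)=|\mathrm{Irr}(B_0)|$.

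First I would record the purely graph-theoretic bound $\mathrm{diam}(\Gamma_p(G))\le|\mathrm{Irr}(B_0)|$. By Theorem~\ref{eigen} the eigenvalues of $\Gamma_p(G)$ are the numbers $\eta_\chi$ with $\chi$ ranging over $\mathrm{Irr}(G)$, and by Theorem~\ref{prin} we have $\eta_\chi\neq 0$ precisely when $\chi\in\mathrm{Irr}(B_0)$. Hence every nonzero eigenvalue lies in the set $\{\eta_\chi:\chi\in\mathrm{Irr}(B_0)\}$, so $\Gamma_p(G)$ has at most $|\mathrm{Irr}(B_0)|$ distinct nonzero eigenvalues and at most $|\mathrm{Irr}(B_0)|+1$ distinct eigenvalues altogether. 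Feeding this into the standard estimate \cite[Theorem 3.13]{Cv}, which bounds the diameter of a connected graph by one less than its number of distinct eigenvalues, gives $\mathrm{diam}(\Gamma_p(G))\le|\mathrm{Irr}(B_0)|$; the subcase where $0$ is not an eigenvalue is even more favourable, since then $|\mathrm{Bl}(G)|=1$ by Theorem~\ref{main}(a), whence $\mathrm{Irr}(B_0)=\mathrm{Irr}(G)$ and the diameter is at most $|\mathrm{Irr}(B_0)|-1$. This is exactly the intermediate bound that appears in the proof of Theorem~\ref{energy}(d).

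Next I would invoke the defect theory of Section~2: the defect groups of the principal block are the Sylow $p$-subgroups of $G$, so $p^{d(B_0)}=|G|_p$. Therefore it suffices to establish $k(B_0)\le p^{d(B_0)}$ — that is, Brauer's $k(B)$-conjecture for the single block $B_0$ — and combining this with the diameter bound of the previous paragraph closes the argument and delivers $\mathrm{diam}(\Gamma_p(G))\le|G|_p$.

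The hard part, and the reason the statement can only be offered as a conjecture, is exactly this last inequality. Brauer's $k(B)$-conjecture is a long-standing open problem; Theorem~\ref{sol} supplies it for $p$-solvable groups (resting on the $k(GV)$-theorem \cite{gl}), which is what promotes Theorem~\ref{energy}(d) to a theorem in that range, but no proof is known for principal blocks of arbitrary finite groups. Via the reduction above, the present statement is thus equivalent to the $k(B)$-conjecture restricted to principal blocks, and any unconditional proof would have to advance that conjecture itself. A realistic interim target would be to verify $k(B_0)\le p^{d(B_0)}$ for those families where the $k(B)$-bound is already established (for instance blocks with cyclic or, more generally, abelian defect groups), thereby extending Theorem~\ref{energy}(d) beyond the $p$-solvable case without resolving the full conjecture.
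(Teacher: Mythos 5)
Your proposal matches the paper exactly: the paper offers this statement only as a conjecture, justified by precisely the reduction you describe --- the diameter bound $\mathrm{diam}(\Gamma_p(G))\le|\mathrm{Irr}(B_0)|$ from Theorem \ref{main}(b) and \cite[Theorem 3.13]{Cv}, combined with the identification of the defect groups of $B_0$ with the Sylow $p$-subgroups, so that the missing ingredient is $k(B_0)\le p^{d(B_0)}$, i.e.\ Brauer's $k(B)$-conjecture for principal blocks. You correctly recognize that no unconditional proof is available (the paper proves the bound only in the $p$-solvable case via Theorem \ref{sol}); the only small caveat is that your reduction shows the statement is \emph{implied by}, not necessarily \emph{equivalent to}, the $k(B)$-conjecture for principal blocks.
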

Now we prove Theorem \ref{nil}.\\
\textit{Proof of Theorem \ref{nil}:}
We claim that $\Omega_p(G)=PO_{p^\prime}(G)-O_{p^\prime}(G)$, where $P$ is a Sylow $p$-subgroup of $G$. It is easy to see that $PO_{p^\prime}(G)-O_{p^\prime}(G)\subseteq \Omega_p(G)$. Thus it suffices to show that $\Omega_p(G)\subseteq P O_{p^\prime}(G)-O_{p^\prime}(G)$. On the contrary, suppose $x\in \Omega_p(G)-( P O_{p^\prime}(G)-O_{p^\prime}(G))$. Then there exist a non-trivial $p$-element $x_p$ and a non-trivial $p^\prime$-element $x_{p^\prime}$ such that $x=x_px_{p^\prime}=x_{p^\prime} x_p$.
Since $G/O_{p^\prime}(G)$ is a
  $p$-group or a Frobenius group, $ P O_{p^\prime}(G)$ is a normal subgroup of $G$. If $x_{p^\prime}\in O_{p^\prime}(G)$, then as every Sylow $p$-subgroup of $G$ is contained in $ P O_{p^\prime}(G)$, we deduce that $x=x_px_{p^\prime}\in P O_{p^\prime}(G)$ which is a contradiction. Thus $x_{p^\prime}\notin O_{p^\prime}(G)$.
   Hence $x_{p^\prime}O_{p^\prime}(G)$ is a non-trivial element of $ C_{G/O_{p^\prime}(G)}(x_pO_{p^\prime}(G))$ which is a contradiction with the structure of $G/O_{p^\prime}(G)$ (see \cite[Theorem 6.4]{Is}). Therefore  $\Omega_p(G)=PO_{p^\prime}(G)-O_{p^\prime}(G)$. Clearly, $H_p(G)=PO_{p^\prime}(G)$. Since $\Gamma_p(G)$ is a $d_p(G)$-regular graph with precisely $c_p(G)$ connected components, applying Perron-Frobenius Theorem, we deduce that $d_p(G)=|H_p(G)|-|O_{p^\prime}(G)|$ is an eigenvalue of $\Gamma_p(G)$ with multiplicity $c_p(G)$. Now let $M:=\{\chi\in \mathrm{Irr}(G/O_{p^\prime}(G))|\,\lambda_\chi(G,p)=d_p(G)\}$.
 Thus by Lemma \ref{ener}, we have
\begin{align}
\mathcal{E}(\Gamma_p(G))&=\sum_{\chi\in \mathrm{Irr}(G/O_{p^\prime}(G))}\chi(1)^2\lambda_\chi(G,p)\nonumber\\
&=|PO_{p^\prime}(G)-O_{p^\prime}(G)|c_p(G) +\sum_{\chi\in \mathrm{Irr}(G/O_{p^\prime}(G))-M}\chi(1)^2\lambda_{\chi}(G,p)\nonumber\\
&=( |H_p(G)|-|O_{p^\prime}(G)|)c_p(G)+\sum_{\chi\in \mathrm{Irr}(G/O_{p^\prime}(G))-M}\chi(1)^2\lambda_{\chi}(G,p)\nonumber\\
&=|G|-|G|_{p^\prime}+\sum_{\chi\in \mathrm{Irr}(G/O_{p^\prime}(G))-M}\chi(1)^2\lambda_{\chi}(G,p).\nonumber
\end{align}
Now let $\chi\in \mathrm{Irr}(G/O_{p^\prime}(G))-M$ and $y\in O_{p^\prime} (G)$. If $P\subseteq \mathrm{Ker}(\chi)$, then $\Omega_p(G)\subseteq \mathrm{Ker}(\chi)$ and we get $\lambda_\chi(G,p)=d_p(G)$ which is a contradiction with this fact that $\chi \notin M$. Thus $P\nsubseteq \mathrm{Ker}(\chi)$. Hence using \cite[Corollary 2.14]{isa} and this fact that $O_{p^\prime}(G)\subseteq \mathrm{Ker}(\chi)$, we deduce that $\sum_{z\in Py-\{1\}}\chi(z)/\chi(1)=-1$. Thus $\lambda_{\chi}(G,p)=|O_{p^\prime}(G)|$. Therefore applying \cite[Corollary 2.7]{isa}, we have
\begin{align}
\mathcal{E}(\Gamma_p(G))&= |G|-|G|_{p^\prime}+|O_{p^\prime}(G)|\sum_{\chi\in \mathrm{Irr}(G/O_{p^\prime}(G))-M} \chi(1)^2\nonumber\\
&=|G|-|G|_{p^\prime}+|O_{p^\prime}(G)|(r_p(G)-c_p(G))\nonumber\\
&=2|G|-2|G|_{p^\prime}\nonumber.
\end{align}
Hence $\Gamma_p(G)$ is a non-hyperenergetic graph and this completes the proof.
\qed\\

We end this section by the proof of Corollary \ref{solvable}.\\
\textit{Proof of Corollary \ref{solvable}:} On the contrary, suppose $p,q\in \pi(G)$ are two distinct primes such that $\Gamma_p(G)$ and $\Gamma_q(G)$ are non-singular. Applying Theorem \ref{energy} (a), we deduce that $O_{p^\prime}(G)=O_{q^\prime}(G)=1$. Without loss of generality, we can assume that $p$ is odd. Since $O_{q^\prime}(G)=1$, we have $O_{p}(G)=1$. Hence using Theorem \ref{main} (g), $\Gamma_p(G)$ is singular which is a contradiction.\qed

%%%%%%%%%%%%%%%%%%%%%%%%%%%%%%%%%%%%%%%%%%%%%%

\section*{Acknowledgements}
 I would like to express
my gratitude to Prof. Gunter Malle for his valuable comments.\\
Funding: This research was supported in part
by a grant  from School of Mathematics, Institute for Research in Fundamental Sciences (IPM).
%%%%%%%%%%%%%%%%%%%%%%%%%%%%

%%%%%%%%%%%%%%%%%%%%%%%%%%%%%


\begin{thebibliography}{22}
\bibitem{Ak}
S. Akbari, M.A. Hosseinzadeh, A short proof for graph energy is at least twice of minimum degree, MATCH Commun. Math. Comput. Chem. 83 (2020) 631-633.
%
\bibitem{kn}
S. Akbari, F. Moazami, S. Zare, Kneser graphs and their complements are hyperenergetic, MATCH Commun. Math. Comput. Chem. 61 (2009) 361-368.
%
\bibitem{al}
A. Al-Tarimshawy, J. Siemons, Singular graphs with dihedral group action, Discrete Math. 344(1) (2021) Paper No. 112119, 6 pp
%
\bibitem{An1}
E. Andrade, J. R. Carmona, G. Infante, M. Robbiano, A lower bound for the energy of hypoenergetic and non hypoenergetic graphs, MATCH Commun. Math. Comput. Chem. 83 (2020) 579-592.
%
\bibitem{At}
P. Atkins, J. de Paula, Physical Chemistry, eighth ed. Oxford University Press, 2006.

\bibitem{2}
L. Babai, Spectra of Cayley graphs, J. Comb. Theory, Ser. B 27(1979) 180-189. DOI: https://doi.org/1001016/0095-8956(79)-90079-0
%
\bibitem{Bo}
S. B. Bozkurt Altindag, D. Bozkurt, Lower bounds for the energy of (bipartite) graphs, MATCH Commun. Math. Comput. Chem. 77 (2017) 9-14.
%
\bibitem{atlas}
J.H. Conway, R.T. Curtis, S.P. Norton, R.A. Parker, R.A. Wilson, Atlas of finite groups, Clarendon Prees, Oxford 1985.
%
\bibitem{eigen}
C. W. Curtis, I. Reiner, Representation theory of finite groups and associative algebras, Pure and Applied Mathematics, vol. XI, Wiley, New york, 1962.
%
\bibitem{Cv}
D.M. Cvtkovi\'{c}, M. Doob, H. Sachs, Spectra of Graphs-Theory and Application, 3rd edition, Johann Ambrosius Barth Verlag (Heidelberg), 1995.
%
\bibitem{3}
D.M. Cvtkovi\'{c}, I. Gutman, The computer system GRAPH: A useful toll in chemical graph theory, J. Comput. Chem. 7 (1986) 640-644.
%
\bibitem{Cvg}
D.M. Cvtkovi\'{c}, I. Gutman, The algebraic multiplicity of the number zero in the spectrum of a bipatite graph, Matematicki Vesnik (Beograd) 9 (1972) 141-150.
%
\bibitem{6}
P. Diaconis, M. Shahshahani, Generating a random permutation with random transpositions, Z. Wahrsch. Verw. Gebiete. 57(1981) 159-179. 0044-3719/81/0057/0159/S04.20
%
\bibitem{gl}
D. Gluck, K. Magaard, U. Riese, P. Schmid, The solution of the $k(GV)$-problem, J. Algebra, 279(2) (2004), 694-719.
%
\bibitem{gr}
A. Granville, K. Ono, Defect zero $p$-blocks for finite simple groups, Trans. Amer. Math. Soc. 348(1) (1996), 331-347.
%
 \bibitem{Huckel}
 A. Graovac, I. Gutman, N. Trinajste, T. Ivkovi, Graph theory and molecular orbitals. Theoretica chimica acta, 26(1) (1972) 67-78.
%
\bibitem{8}
I. Gutman, Topology and stability of canjugated hydrocarbons. The dependence of total $\pi$-electron energy on molecular topology, J. Serb. Chem. Soc. 70 (2005) 441-456.
%
\bibitem{Hyper}
I. Gutman, Hyperenergetic Molecular graphs, J. Serb. Chem. Soc. 64 (1999) 199-205.
%
\bibitem{182}
I. Gutman, The energy of a graph, Ber. Math. Statist. Sekt. Forschungszentrum Graz 103 (1978) 1-22.
%
\bibitem{7}
I. Gutman, On graphs whose energy exceeds the number of vertices, Linear Algebra Appl. 429 (2008) 2670-2677.
%
\bibitem{ph}
I. Gutman, B. Borovicanin, Nullity of graphs: an updated survey, Zb. Rad. (Beogr.) 14(22) (2011) 137-154.
%
\bibitem{GZ}
I. Gutman, S. Zare firoozabadi, J. A. de la Pena, J. Rada, On the energy of regular graphs, MATCH Commun. Math. Comput. Chem. 57 (2007) 435-442.
%
\bibitem{Is}
I. M. Isaacs, Finite group theory, American Mathematical Socitey, Providence,RI,2005.
%
\bibitem{isa}
I.M. Isaacs, Character Theory of Finite Groups, AMS Chelsea Publishing, Providence, RI, 2006. Corrected reprint of the 1976 original [Academic Press. New York; MR0460423].
%
\bibitem{jahan}
A. Jahanbani, Some new lower bounds for energy of graphs, Appl. Math. Comput. 296 (2017), 233-238.
%
\bibitem{GA}
 G. James, A. Kerber.  The Representation Theory of the Symmetric Group. Addison-Wesley Publishing Company, 1981.
%
\bibitem{ko}
E. V. Konstantinova, D. Lytkina, Integral cayley graphs over finite groups, Algebra Colloq. 27(1) (2020) 131-136.
%
\bibitem{17}
A. Lubotzky, Discrete groups, Expanding graphs, and invariant Measures, Birkhauser Verlag, Basel, 1994.
%
\bibitem{Ma}
C. A. Marin, J. Monsalve, J. Rada, Maximum and minimum energy trees with two and three branched vertices, MATCH Commun. Math. Comput. Chem.74 (2015) 285-306.
%
\bibitem{mich}
G. Michler, Nonsolvable finite groups with cyclic sylow $p$ subgroups have nonprincipal $p$-blocks, J. Algebra 83 (1) (1983) 179-188.
%
\bibitem{nag}
H. Nagao, On a conjecture of Brauer for $p$-solvable groups, J. Math. Osaka City Univ. 13 (1962) 35-38.
%
\bibitem{nav}
G. Navarro, Characters and blocks of finite groups. London Mathematical Society Lecture NOte Seriety Lecture Note Series, vol. 250. Cambridge University Press, Cambridge (1998).
%
\bibitem{oboudi}
M .R. Oboudi, A very short proof for a lower bound for energy of graphs, MATCH Commun. Math. Comput. Chem. 84 (2) (2020) 345-347.
%
\bibitem{19}
M. Ram Murty, Ramanujan graphs, J. Ramanujan Math. soc. 18(2003) 1-20.
%
\bibitem{sz}
J. Siemons, A. Zalesski, Remarks on singular Cayley graphs and vanishing elements of simple groups, J. Algebraic Combin. 50(4) (2019) 379-401.
%
\bibitem{ch}
A. Streitwieser, Molecular Orbital Theory for Organic Chemists, Wiley, New York (1961).
%
\bibitem{Ti}
T. Tian, W. Yan, S. Li, On the minimal energy of trees with a given number of vertices of odd degree, MATCH Commun. Math. Comput. Chem. 73 (2015) 3-10.
%
\bibitem{zh}
B. Zhou, Lower bounds for the energy of quadrangle-free graphs, MATCH Commun. Math. Comput. Chem. 55 (2006) 91-94.
\end{thebibliography}
\end{document}